\numberwithin{equation}{section}
\newtheorem{theorem}{Theorem}[section]
\newtheorem{claim}[theorem]{Claim}
 \newtheorem{conjecture}[theorem]{Conjecture}
\newtheorem{corollary}[theorem]{Corollary}
\newtheorem{lemma}[theorem]{Lemma}
\newtheorem*{thmRRT}{Rainbow Ramsey Theorem}
\newtheorem*{thmRT}{Ramsey's Theorem}
\theoremstyle{definition}
\newtheorem{definition}[theorem]{Definition}
\newcommand{\<}{\langle}
\def\uh{\upharpoonright}
\renewcommand{\>}{\rangle}
\newcommand{\low}{\operatorname{low}}
\newcommand{\RCA}{\operatorname{RCA}_0}
\newcommand{\ACA}{\operatorname{ACA}_0}
\newcommand{\WKL}{\operatorname{WKL}_0}
\newcommand{\RT}{\operatorname{RT}}
\newcommand{\COH}{\operatorname{COH}}
\newcommand{\SRT}{\operatorname{SRT}}
\newcommand{\ADS}{\operatorname{ADS}}
\newcommand{\CADS}{\operatorname{CADS}}
\newcommand{\SADS}{\operatorname{SADS}}
\newcommand{\CAC}{\operatorname{CAC}}
\newcommand{\CCAC}{\operatorname{CCAC}}
\newcommand{\SCAC}{\operatorname{SCAC}}
\newcommand{\RRT}{\operatorname{RRT}}
\begin{document}

\title[Rainbow Ramsey Theorem for Triples]{Rainbow Ramsey Theorem for Triples Is Strictly Weaker Than the Arithmetical Comprehension Axiom}

\subjclass[2000]{03B30, 03F35, 03D32, 03D80}

\author{Wei Wang}

\thanks{The author thanks Chitat Chong and Yue Yang for sharing insights on Ramsey theory and reverse mathematics. He also thanks Hirschfeldt and Slaman for their inspiring lectures on Seetapun's theorem, IMS of NUS and the John Templeton Foundation for organizing and funding his participation in a series of logic programs, and the referees for comments and corrections. This research was partially supported by NSF Grant 11001281 of China and an NCET grant from the Ministry of Education of China.}

\address{Institute of Logic and Cognition and Department of Philosophy, Sun Yat-sen University, 135 Xingang Xi Road, Guangzhou 510275, P.R. China}
\email{wwang.cn@gmail.com}

\begin{abstract}
We prove that $\RCA + \RRT^3_2 \not\vdash \ACA$ where $\RRT^3_2$ is the Rainbow Ramsey Theorem for $2$-bounded colorings of triples. This reverse mathematical result is based on a cone avoidance theorem, that every $2$-bounded coloring of pairs admits a cone-avoiding infinite rainbow, regardless of the complexity of the given coloring. We also apply the proof of the cone avoidance theorem to the question whether $\RCA + \RRT^4_2 \vdash \ACA$ and obtain some partial answer.
\end{abstract}

\maketitle

\section{Introduction}

For computability theorists, Ramsey's theorem has been attractive for decades since Specker's work \cite{Specker:1971.Ramsey}. In this pioneering work, Specker showed that a computable $2$-coloring of pairs may admit no computable infinite homogeneous set. Let us recall some concepts from Ramsey theory. We use $[X]^n$ to denote the set of $n$-element subsets of $X$ where $n \leq \omega$; a function $f: [\omega]^n \to k$ is also called a \emph{$k$-coloring}, and a set $X$ is \emph{$f$-homogeneous} if $f$ is constant on $[X]^n$.

\begin{thmRT}
If $n, k \in \omega$ and $f$ is a $k$-coloring of $[\omega]^n$, then there exists an infinite $f$-homogeneous set.
\end{thmRT}

The instance of Ramsey's theorem for specific $n$ and $k$ is denoted by $\RT^n_k$. As a consequence of Specker's work, $\RCA \not\vdash \RT^2_2$. Here $\RCA$ denotes the Recursive Comprehension Axiom ($\RCA$), the weakest member of the \emph{big five} subsystems of second order arithmetic (see \cite{Simpson:1999.SOSOA}), and a base system for most of reverse mathematics. In this article, we shall also take $\RCA$ as a base system and always assume $\RCA$ without explicit reference.

Later, Jockusch \cite{Jockusch:1972.Ramsey} proved a series of interesting results concerning complexity of homogeneous sets in terms of arithmetic hierarchy. Moreover, Jockusch constructed a computable $2$-coloring of triples, for which every infinite homogeneous set computes the halting problem. In terms of reverse mathematics, Ramsey's theorem for triples ($\RT^3_2$) is equivalent to the Arithmetical Comprehension Axiom ($\ACA$), another member of the big five subsystems.

Jockusch's work left opened whether $\RT^2_2$ implies $\ACA$. This gap was overcame by Seetapun \cite{Seetapun.Slaman:1995.Ramsey}, who showed that $\ACA$ is strictly stronger than $\RT^2_2$. In his celebrated proof, Seetapun imposed some complexity conditions on Mathias forcing and got a subset of forcing conditions. In order to prove density lemmas for this subset, he exploited a cone avoidance theorem for $\Pi^0_1$ classes by Jockusch and Soare \cite{Jockusch.Soare:1972.TAMS}, which reflects the power of $\Pi^0_1$ classes in controlling complexity. From a computability theoretic point of view, Seetapun's proof shed deep insight into the classical proof of $\RT^2_2$.

Seetapun's proof was analyzed in Cholak, Jockusch and Slaman \cite{Cholak.Jockusch.ea:2001.Ramsey}, where two consequences of $\RT^2_2$ were introduced, namely $\COH$ and $\SRT^2_2$.

\begin{definition}
(1) Let $\vec{R} = (R_n: n \in \omega)$ be a sequence of subsets of $\omega$. An infinite set $X \subseteq \omega$ is \emph{$\vec{R}$-cohesive}, if and only if for each $n$ either $X \cap R_n$ or $X - R_n$ is finite.

$\COH$ is the assertion that there exists an $\vec{R}$-cohesive set for every $\vec{R}$.

(2) A $k$-coloring $f: [\omega]^2 \to k$ is \emph{stable} if and only if for every $x$ there exist $i < k$ and $y$ such that $f(x,z) = i$ for all $z > y$.

$\SRT^2_k$ is the assertion that there exists an infinite homogeneous set for every stable $k$-coloring.
\end{definition}

Cholak, Jockusch and Slaman \cite{Cholak.Jockusch.ea:2001.Ramsey} and Mileti \cite{Mileti:2004.thesis} showed that $\RT^2_2$ is equivalent to $\COH + \SRT^2_2$. Moreover, Cholak, Jockusch and Slaman obtained a sharp bound on complexity of homogeneous sets for coloring of pairs, and with this sharp bound they built an $\omega$-model of $\RT^2_2$ containing only $\low_2$ sets ($X$ is $\low_2$ if and only if $X'' \equiv_T \emptyset''$). The decomposition of $\RT^2_2$ into $\COH$ and $\SRT^2_2$ has became a paradigm of analysis of combinatorial principles relating to $\RT^2_2$. For example, in a similar vein Hirschfeldt and Shore \cite{Hirschfeldt.Shore:2007} decomposed the Ascending-Descending-Sequence principle ($\ADS$) into $\CADS$ and $\SADS$, and also the Chain-Antichain principle ($\CAC$) into $\CCAC$ and $\SCAC$; and respectively, they proved that $\CADS \not\vdash \SADS$, $\SADS \not\vdash \CADS$, $\CCAC \not\vdash \SCAC$ and $\SCAC \not\vdash \CCAC$.

Another interesting aspect of this analysis is that it gives a new proof of $\RT^2_2$. Actually, computability theoretic techniques are in need for controlling complexity of solutions to certain combinatorial principles, and usually lead to new insight into combinatorial proofs or even new proofs. A recent example is a joint work of Csima and Mileti on Rainbow Ramsey Theorems \cite{Csima.Mileti:2009.rainbow}.

\begin{definition}
A function $f: [\omega]^k \to \omega$ is a \emph{$b$-bounded coloring} if $|f^{-1}(c)| \leq b$ for each $c$. $X \subseteq \omega$ is an \emph{$f$-rainbow} if $f \uh [X]^k$ is injective.
\end{definition}

\begin{thmRRT}
$(\RRT^k_b)$ For each $b$-bounded $f: [\omega]^k \to \omega$ there exists an infinite $f$-rainbow.
\end{thmRRT}

A well known proof of $\RRT^k_b$ is by Galvin. For each $b$-bounded $f: [\omega]^k \to \omega$, Galvin defined a dual coloring $g: [\omega]^k \to b$ such that each $g$-homogeneous set is an $f$-rainbow, and then apply $\RT^k_b$ (see \cite{Csima.Mileti:2009.rainbow}). Csima and Mileti obtained a new proof of $\RRT^2_2$. Applying results from algorithmic randomness, Csima and Mileti showed that if $R$ is $2$-random relative to $Z$ and $f \leq_T Z$ is $2$-bounded then $R$ computes an infinite $f$-rainbow. From this, they deduced several reverse mathematical consequences, like $\RRT^2_2 \not\vdash \RT^2_2$, $\RRT^2_2 \not\vdash \WKL$, etc.

In this paper, we shall present some further investigations of $\RRT^k_b$.

In \S \ref{sect:triples}, we present the main result that $\RRT^3_2 \not\vdash \ACA$ (Theorem \ref{thm:RRT3-ACA}). As $\RT^3_2 \vdash \ACA$ by Jockusch \cite{Jockusch:1972.Ramsey}, this sounds a little surprising. It then follows from Jocksuch's result that $\RRT^3_2$ is strictly weaker than $\RT^3_2$. As one may expect, Theorem \ref{thm:RRT3-ACA} is based on some cone avoidance result (Lemma \ref{lem:RRT3-cone-avoidance}). By an application of a cone avoidance result for $\COH$, we reduce this cone avoidance for colorings of triples to a strong cone avoidance theorem for $2$-bounded colorings of pairs (Theorem \ref{thm:rainbow-for-pairs}). Theorem \ref{thm:rainbow-for-pairs} is strong, in that it gives us rainbows of \emph{low complexity} for colorings of \emph{arbitrary complexity}. We shall see in \S \ref{sect:pairs}, that the strength of Theorem \ref{thm:rainbow-for-pairs} actually comes from some hidden strength of Seetapun's cone-avoidance theorem for $\RT^2_2$, which was discovered by Dzhafarov and 
Jockusch \cite{Dzhafarov.Jockusch:2009}. The proof of Theorem \ref{thm:rainbow-for-pairs} combines measure theoretic argument from Csima and Mileti \cite{Csima.Mileti:2009.rainbow} and Mathias forcing in Seetapun's style, and also inductive applications of infinite pigeonhole principle. From a combinatorial viewpoint, the application of $\COH$ and inductive applications of pigeonhole principle together amount to inductive applications of $\RT^2_2$, and such applications of $\RT^2_2$ dispense the need of $\RT^3_2$ in building rainbows for colorings of triples.

In \S \ref{sect:quadruples}, we apply the method in \S \ref{sect:pairs} and \S \ref{sect:triples} to obtain some partial cone avoidance result for $2$-bounded colorings of quadruples. In \S \ref{sect:q}, we conclude this paper with a conjecture.

Before the presentations of results and proofs, we introduce some notions.

A \emph{model} of second order arithmetic is a pair $(M, \mathcal{S})$ where $M$ is a first order model of arithmetic and $\mathcal{S}$ is a subset of the powerset of $M$. An \emph{$\omega$-model} is a model $(M, \mathcal{S})$ with $M = \omega$. The least $\omega$-model of $\RCA$ is $(\omega, \mathcal{R})$ where $\mathcal{R}$ is the set of all computable sets. If $\mathcal{M} = (\omega, \mathcal{S})$ is a model and $X \subseteq \omega$, then $\mathcal{M}[X] = (\omega, \mathcal{S}[X])$, where $\mathcal{S}[X] = \{Z: \exists Y \in \mathcal{S}(Z \leq_T X \oplus Y)\}$. In addition, if $\mathcal{M} \models \RCA$ then $\mathcal{M}[X] \models \RCA$ for all $X$.

A \emph{tree} $T$ is a subset of $\omega^{< \omega}$ closed under initial segments. If $T$ is a tree, then $[T]$ denote the set of infinite sequences whose finite initial segments are always in $T$. A tree $T$ is \emph{$X$-computably bounded} if there exists an $X$-computable function $h$ such that $T \cap \omega^n \subseteq D_{h(n)}$ for each $n$, where $D_i$ is the $i$-th finite subset of $\omega^{< \omega}$ under some fixed computable coding.

Working with Ramsey-like combinatorial principles, we identify $[X]^{< \omega}$ with the set of strictly increasing sequences in $X^{< \omega}$ and $[X]^\omega$ with the set of infinite strictly increasing sequences from $X$. We use lower case Greek letters $\sigma, \tau, \ldots$ for elements of $[\omega]^{<\omega}$ and $\omega^{<\omega}$, and write $\sigma \tau$ for concatenation of $\sigma$ and $\tau$. Under the above convention, we may use concatenations for unions of finite sets, e.g., $\sigma \tau = \sigma \cup \tau$ and $\sigma \{x\} = \sigma \cup \{x\}$. We fix a computable bijection $\<\ldots\>: [\omega]^{<\omega} \to \omega$, such that
$$
    \<x_0,\ldots,x_{n-1}\> < \<y_0,\ldots,y_{n-1}\> \leftrightarrow \exists i < n(x_{i} < y_{i} \wedge \forall j \in (i,n)(x_j = y_j)).
$$

For computations using finite oracles, we write $\Phi_e(\sigma; x) \downarrow$ if $\Phi_e(\sigma; x)$ converges in no more than $|\sigma|$ steps. For $Z \subseteq \omega$ and $\sigma \in [\omega]^{<\omega}$, we write $Z \oplus \sigma$ for $(Z \uh |\sigma|) \oplus \sigma$. So, $\Phi_e(Z \oplus \sigma; x) \downarrow$ means $\Phi_e((Z \uh |\sigma|) \oplus \sigma; x) \downarrow$, etc.

Below, we summarize some useful results from Seetapun \cite{Seetapun.Slaman:1995.Ramsey} and Cholak, Jockusch and Slaman \cite{Cholak.Jockusch.ea:2001.Ramsey}, which will be called the \emph{cone avoidance} of $\RT^2_2, \COH$ or $\SRT^2_n$.

\begin{theorem}\label{thm:SS-CJS}
Suppose that $W \not\leq_T Z$.
\begin{enumerate}
    \item If $f: [\omega]^2 \to 2$ is $Z$-computable then there exists an infinite $f$-homogeneous $X$ such that $W \not\leq_T Z \oplus X$.
    \item If $\vec{R} = (R_n: n \in \omega)$ is uniformly $Z$-computable then there exists an infinite $\vec{R}$-cohesive $X$ such that $W \not\leq_T Z \oplus X$.
    \item If $g: \omega \to n$ is $Z'$-computable then there exist $k < n$ and $X \in [g^{-1}(k)]^\omega$ such that $W \not\leq_T Z \oplus X$.
\end{enumerate}
\end{theorem}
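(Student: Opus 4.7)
The plan is to establish each of the three items by Mathias-style forcing that, at its crucial juncture, invokes the Jockusch--Soare cone avoidance theorem for computably bounded $\Pi^0_1$ classes: any nonempty computably bounded $\Pi^0_1(Z)$ class contains a member $Y$ with $W \not\leq_T Z \oplus Y$. The three parts interact with this tool in increasingly delicate ways.

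I would dispose of (3) first as a warm-up. Since $g \leq_T Z'$, there is a uniformly $Z$-computable approximation $\tilde g(x,s)$ converging to $g(x)$ for every $x$. Form the $Z$-computable tree $T$ whose level-$m$ nodes are strictly increasing tuples $\<x_0,\ldots,x_{m-1}\>$ with $\tilde g(x_i,x_{m-1}) = \tilde g(x_0,x_{m-1})$ for every $i<m$. This tree is computably bounded, and any infinite path through $T$ enumerates an infinite subset of some single preimage $g^{-1}(k)$. Applying Jockusch--Soare to $T$ delivers the required $X$.

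For (2), I would iterate a Mathias-style construction of reservoirs. Starting from $Y_0 = \omega$, at each stage one of $Y_n \cap R_n$ and $Y_n \setminus R_n$ is infinite, and either choice yields cohesiveness for the $n$th set. The cone-avoidance requirements $\Phi_e^{Z \oplus G} \neq W$ are met by an interleaved branching: given $(F,Y)$ with $W \not\leq_T Z \oplus Y$, consider the $\Pi^0_1(Z \oplus Y)$ class of infinite $Y' \subseteq Y$ whose every finite initial segment $\sigma$ satisfies $\Phi_e^{Z \oplus F\sigma}$ never disagreeing with $W$ wherever it converges. If this class is empty, some finite extension of $F$ inside $Y$ already diagonalises; otherwise Jockusch--Soare returns an infinite $Y' \subseteq Y$ with $W \not\leq_T Z \oplus Y'$, and one passes to $(F, Y')$. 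A pseudo-intersection along the resulting chain produces the desired cohesive $G$.

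Part (1), Seetapun's theorem, is the heart of the matter. Conditions are pairs $(F,X)$ with $F$ finite and $f$-homogeneous, $X$ infinite with $\min X > \max F$ and every pair in $F \times X$ coloured identically to $[F]^2$, and crucially $W \not\leq_T Z \oplus X$. Density for $|G|\geq n$ is handled by pigeonhole on the colours of $[X]^2$. To force $\Phi_e^{Z \oplus G} \neq W$ given $(F,X)$, one confronts the dichotomy: either there is a finite pre-homogeneous $F' \supseteq F$ in $F \cup X$ with $\Phi_e^{Z \oplus F'}(n) \downarrow \neq W(n)$ for some $n$, in which case $(F', X \cap (\max F', \infty))$ works; or the tree of $\sigma \in [X]^{<\omega}$ extending $F$ pre-homogeneously whose computations $\Phi_e^{Z \oplus F\sigma}$ never disagree with $W$ is infinite, providing a nonempty computably bounded $\Pi^0_1(Z \oplus X)$ class from which Jockusch--Soare extracts a reservoir $X' \subseteq X$ with $W \not\leq_T Z \oplus X'$. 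The main obstacle is the soundness of this second alternative: one must verify that a path through this tree genuinely constitutes a valid pre-homogeneous reservoir while preserving cone avoidance, and that iterating over all $e$ with careful bookkeeping produces a single generic $G$ that is $f$-homogeneous and satisfies $W \not\leq_T Z \oplus G$.
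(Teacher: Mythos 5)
The paper does not actually prove Theorem \ref{thm:SS-CJS}; it imports it from \cite{Seetapun.Slaman:1995.Ramsey} and \cite{Cholak.Jockusch.ea:2001.Ramsey} (and, for the version of (3) without the computability hypothesis, points to a Seetapun-style Mathias forcing noted by Dzhafarov and Jockusch). So your proposal must stand on its own, and the central mechanism you use to force $\Phi_e(Z \oplus G) \neq W$ in parts (1) and (2) does not work. In both cases, when no single finite extension already diagonalizes, you propose applying Jockusch--Soare to ``the class of reservoirs whose finite subsets give computations never disagreeing with $W$ where they converge.'' That class is defined in terms of $W$, so it is $\Pi^0_1$ in $Z \oplus X \oplus W$, not in $Z \oplus X$; Jockusch--Soare relative to $Z \oplus X \oplus W$ only avoids cones above $Z \oplus X \oplus W$, which is vacuous since $W \leq_T W$. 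Worse, even granted such a reservoir $X'$, nothing has been forced: a generic $G$ with $F \subseteq G \subseteq F \cup X'$ could still have $\Phi_e(Z \oplus G)$ total and equal to $W$. The correct dichotomy is by \emph{splitting computations}, precisely the pattern this paper itself uses in Definition \ref{dfn:e-split} and Lemma \ref{lem:cone-avoidance}: either there are $\sigma_0, \sigma_1 \subseteq X$ and $n$ with $\Phi_e(Z \oplus (F\sigma_0); n) \downarrow \neq \Phi_e(Z \oplus (F\sigma_1); n) \downarrow$, so one side must disagree with $W(n)$ and a finite extension forces the requirement; or no such split exists, in which case every total $\Phi_e(Z \oplus G)$ for $G$ between $F$ and $F \cup X$ equals a single partial $(Z \oplus X)$-computable function and hence cannot be $W$. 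For $\COH$ the no-split case requires no appeal to Jockusch--Soare at all. For $\RT^2_2$ Jockusch--Soare does enter, but applied to a $\Pi^0_1(Z \oplus X)$ class of \emph{candidate colourings} rather than of reservoirs; your sketch of (1) also glosses over the genuine combinatorial difficulty of Seetapun's argument, that one cannot commit to a single colour in advance and must carry parallel pre-homogeneous finite sets for both colours with a lemma guaranteeing one side always extends.

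For (3), the object you describe is not a tree: the condition $\tilde g(x_i,x_{m-1}) = \tilde g(x_0,x_{m-1})$ is evaluated at time $x_{m-1}$, which changes with the level, so the set of tuples is not closed under initial segments. Even after taking the downward closure, the instance $i = m-1$ demands that $\tilde g(x_{m-1}, x_{m-1})$ already equal the stable value, which a limit approximation need not satisfy for any $x_{m-1}$ in the relevant colour class, so the tree may be finite. And even setting both of these aside, the tree is infinitely branching (every sufficiently large $y$ extends a node), hence not $Z$-computably bounded, so Theorem \ref{thm:Jockusch-Soare} does not directly apply. The underlying idea is salvageable --- impose the agreement only for $i < m-1$, restrict each node to a bounded set of permissible successors, and verify infiniteness --- but as written the argument for (3) also does not go through.
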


Dzhafarov and Jockusch \cite{Dzhafarov.Jockusch:2009} observed that a Seetapun-style Mathias forcing could work for Theorem \ref{thm:SS-CJS}(3) without the computability condition on $g$. Alternatively, we can remove this condition by an application of Theorem \ref{thm:SS-CJS}(3) itself, in a way similar to \cite[\S 2.4]{Seetapun.Slaman:1995.Ramsey}.

\begin{corollary}[Lemma 5.2(i) in \cite{Dzhafarov.Jockusch:2009}] \label{cor:Seetapun}
Suppose that $W \not\leq_T Z$ and $g: \omega \to n$ is any finite partition of $\omega$. Then there exist $k < n$ and $X \in [g^{-1}(k)]^\omega$ with $W \not\leq_T Z \oplus X$.
\end{corollary}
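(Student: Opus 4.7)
The plan is to bootstrap Theorem \ref{thm:SS-CJS}(3) via a Seetapun-style Mathias forcing in which arbitrary reservoirs are allowed and the $Z'$-computability hypothesis on the partition is absorbed by an iterated application of Theorem \ref{thm:SS-CJS}(3) at the critical step. The forcing conditions are pairs $(F, B)$ where $F \in [\omega]^{<\omega}$ is $g$-monochromatic of some color $k_F$ (this constraint is vacuous when $F = \emptyset$), $B \subseteq \omega$ is infinite with $B \cap g^{-1}(k_F)$ infinite, $\max F < \min B$, and $W \not\leq_T Z \oplus B$. Extensions $(F', B') \leq (F, B)$ require $F \subseteq F'$, $F' \setminus F \subseteq B \cap g^{-1}(k_{F'})$, and $B' \subseteq B$ with $\max F' < \min B'$. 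The initial condition $(\emptyset, \omega)$ is valid by the hypothesis $W \not\leq_T Z$, and a sufficiently generic filter produces an infinite $G \subseteq g^{-1}(k)$ for some $k < n$ satisfying $W \not\leq_T Z \oplus G$.

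The density arguments handle diagonalization against each Turing functional $\Phi_e$. Following Seetapun, one bifurcates: either there is a finite $g$-monochromatic extension $F' \supseteq F$ with $F' \setminus F \subseteq B$ and some $x$ such that $\Phi_e(Z \oplus F'; x) \downarrow \neq W(x)$, in which case we pass to $(F', B \cap (\max F', \infty))$; or no such $F'$ exists. In the second case the standard Seetapun computation of $W$ from the reservoir naively needs $g$ as an oracle (to recognize monochromatic extensions), yielding only $W \leq_T Z \oplus B \oplus g$, which is not yet a contradiction. To eliminate the dependence on $g$, we invoke Theorem \ref{thm:SS-CJS}(3) relativized to $Z \oplus B$ on a $(Z \oplus B)'$-computable auxiliary partition of $B$ extracted from the convergence pattern of $\Phi_e$ along arbitrary finite extensions drawn from $B$; this produces an infinite $B'' \subseteq B$ on which the would-be computation of $W$ becomes uniform across all extensions, so the naive derivation now succeeds without consulting $g$ and contradicts $W \not\leq_T Z \oplus B'' \leq_T Z \oplus B$.

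The main obstacle is designing the auxiliary partition so that the cone-avoiding monochromatic subset returned by Theorem \ref{thm:SS-CJS}(3) is still compatible with the $g$-color $k_F$ of the current condition, allowing the forcing to continue producing $g$-monochromatic elements. This mirrors the pre-homogeneous-to-homogeneous transition of \cite[\S 2.4]{Seetapun.Slaman:1995.Ramsey}, where cohesiveness renders a coloring effectively stable and thereby $Z'$-computable; here the analogous stabilization is obtained by thinning $B$ according to a $\Delta^0_2(Z \oplus B)$ partition induced by $\Phi_e$, and the verification that the thinned reservoir simultaneously preserves $g$-richness in color $k_F$ and the cone-avoidance $W \not\leq_T Z \oplus B''$ is the technical heart of the bootstrap.
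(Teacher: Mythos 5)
Your proposal takes a genuinely different route from the paper's proof, which is a short reduction: by the Friedberg Jump Inversion construction relativized to $Z$, one builds a set $Y$ such that $W \not\leq_T Y \oplus Z$ and $g \leq_T (Y \oplus Z)'$, and then applies Theorem \ref{thm:SS-CJS}(3) with $Y \oplus Z$ in place of $Z$. That argument is complete in two lines and never forces directly over the partition $g$. Your approach instead attempts the direct Mathias forcing attributed in the paper to Dzhafarov and Jockusch; that route is viable in principle, but as written your sketch has a real gap precisely where you flag ``the technical heart.''

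The gap is structural, not merely a missing verification. Your conditions $(F,B)$ commit to a $g$-color $k_F$ as soon as $F \neq \emptyset$, and validity requires $B \cap g^{-1}(k_F)$ to be infinite. In the second branch of the diagonalization you want to replace $B$ by some $B''$ obtained from Theorem \ref{thm:SS-CJS}(3) applied to an auxiliary $\Delta^0_2(Z\oplus B)$ partition derived from $\Phi_e$. But Theorem \ref{thm:SS-CJS}(3) hands you a cone-avoiding monochromatic piece of \emph{that auxiliary partition}, with no control over which piece you get, and there is no reason the piece it returns meets $g^{-1}(k_F)$ infinitely often: the auxiliary partition is defined from $\Phi_e$, not from $g$, and $g$ is of arbitrary complexity, so you cannot effectively intersect the two partitions nor arrange the cone-avoiding piece to land in a $g$-rich class. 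In other words, the thinning needed to make the ``Seetapun computation'' of $W$ run without consulting $g$ may destroy the invariant that the reservoir is rich in color $k_F$, and then the condition is no longer extendable. A working direct proof must avoid committing a color early --- for instance by tracking all $n$ colors simultaneously in the finite part of the condition, or by keeping the color choice latent and resolving it at the end via a $\Pi^0_1$ cone-avoidance argument --- but your forcing notion, as stated, does neither. By contrast, the jump-inversion route sidesteps the whole issue: it packages $g$ into the jump of a new cone-avoiding oracle $Y \oplus Z$, after which the already-proved $Z'$-computable case applies verbatim.
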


\begin{proof}
Following the proof of Friedberg's Jump Inversion Theorem (see \cite[VI.3]{Soare:1987.book}), we can build $Y$ such that $W \not\leq_T Y \oplus Z$ and $g \leq_T (Y \oplus Z)'$. Now the desired $k$ and $X$ can be obtained from Theorem \ref{thm:SS-CJS}(3).
\end{proof}

We call Corollary \ref{cor:Seetapun} as another \emph{Seetapun's cone avoidance} for infinite pigeonhole principle. The strength of the above corollary is the source of the strength of Theorem \ref{thm:rainbow-for-pairs}. Furthermore, one should note that Corollary \ref{cor:Seetapun} could be applied to remove the computability condition on $\vec{R}$ in Theorem \ref{thm:SS-CJS}(2). As we do not need such a strong result for $\COH$ here, we leave this as an exercise for practising Mathias forcing.

We shall need the following theorem by Jockusch and Soare \cite{Jockusch.Soare:1972.TAMS}.

\begin{theorem}\label{thm:Jockusch-Soare}
If $W \not\leq_T Z$ and $T \leq_T Z$ is an infinite $Z$-computably bounded tree then there exists $X \in [T]$ with $W \not\leq_T Z \oplus X$.
\end{theorem}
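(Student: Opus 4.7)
The plan is to prove Theorem~\ref{thm:Jockusch-Soare} by forcing with nonempty $\Pi^0_1(Z)$ subclasses of $[T]$. I would construct a descending sequence $[T] = P_0 \supseteq P_1 \supseteq P_2 \supseteq \cdots$ of such classes; because $T$ is $Z$-computably bounded, each $P_e$ is compact, so $\bigcap_e P_e$ is nonempty and any $X$ in it lies in $[T]$. At stage $e+1$ I would refine $P_e$ to a nonempty $\Pi^0_1(Z)$ class $P_{e+1}$ forcing the requirement $R_e$: $\Phi_e^{Z \oplus X} \neq W$, meaning every $X \in P_{e+1}$ satisfies $\Phi_e^{Z \oplus X} \neq W$. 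Any $X \in \bigcap_e P_e$ then witnesses the theorem.

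The key density step is: given $P_e$, produce a nonempty $\Pi^0_1(Z)$ subclass $P_{e+1}$ forcing $R_e$. I would split into two cases. \textbf{Case 1:} some $n$ and some $X \in P_e$ satisfy $\Phi_e^{Z \oplus X}(n) \uparrow$; then $P_{e+1} := P_e \cap \{Y : \Phi_e^{Z \oplus Y}(n) \uparrow\}$ is a nonempty $\Pi^0_1(Z)$ class (intersection of $\Pi^0_1(Z)$ with $\Pi^0_1$) forcing divergence at $n$. \textbf{Case 2:} Case 1 fails, so $\Phi_e^{Z \oplus X}$ is total for every $X \in P_e$. By compactness of $P_e$, for each $n$ there is a least level $k_n$ such that $\Phi_e^{Z \oplus (X \uh k_n)}(n) \downarrow$ in $k_n$ steps for all $X \in P_e$; because emptiness of a $\Pi^0_1(Z)$ class sitting in a $Z$-computably bounded space is $\Sigma^0_1(Z)$, the map $n \mapsto k_n$ is $Z$-computable by bounded search.

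Let $V_n := \{\Phi_e^{Z \oplus X}(n) : X \in P_e\}$, a finite set. If $|V_n| = 1$ for every $n$, then $\Phi_e^{Z \oplus X}$ is a single function $f$ on $P_e$, with $f \leq_T Z$ (to compute $f(n)$, search for the unique $v$ for which $P_e \cap \{X : \Phi_e^{Z \oplus (X \uh k_n)}(n) \neq v\}$ is empty); since $W \not\leq_T Z$, $f \neq W$ at some $n_0$, so $P_{e+1} := P_e$ forces $\Phi_e^{Z \oplus X}(n_0) = f(n_0) \neq W(n_0)$. Otherwise some $n_0$ has $|V_{n_0}| \geq 2$; at most one element of $V_{n_0}$ equals $W(n_0)$, so working non-uniformly in the metatheory I would choose $v \in V_{n_0}$ with $v \neq W(n_0)$ and set $P_{e+1} := P_e \cap \{X : \Phi_e^{Z \oplus (X \uh k_{n_0})}(n_0) = v\}$, a nonempty $\Pi^0_1(Z)$ class forcing $\Phi_e^{Z \oplus X}(n_0) = v \neq W(n_0)$. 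The main obstacle is the compactness work in Case 2: verifying that $n \mapsto k_n$ is total and $Z$-computable (which rests squarely on the $Z$-computable boundedness of $T$) and that the pruning in the second sub-case yields a genuine $\Pi^0_1(Z)$ class, which is done by expressing the condition $\Phi_e^{Z \oplus X}(n_0) = v$ as a clopen condition on $X \uh k_{n_0}$ once the uniform convergence bound $k_{n_0}$ is in hand.
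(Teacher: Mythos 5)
The paper does not prove Theorem~\ref{thm:Jockusch-Soare}; it cites it directly from Jockusch and Soare's 1972 paper. Your proposal is a correct reconstruction of the standard forcing argument for cone avoidance in computably bounded $\Pi^0_1(Z)$ classes, which is indeed how the cited result is proved, so there is nothing in the paper to compare against except the citation.

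One small imprecision worth cleaning up: in Case~2 you claim the \emph{least} level $k_n$ is $Z$-computable. For a fixed $k$, the predicate ``$\Phi_e^{Z\oplus(X\uh k)}(n)$ converges within $k$ steps for all $X\in P_e$'' is only $\Sigma^0_1(Z)$ (emptiness of a $\Pi^0_1(Z)$ subclass of a $Z$-computably bounded space), not decidable in $Z$, so you cannot in general identify the least such $k$. What you can do is dovetail the emptiness searches over all $k$ and output the first value of $k$ whose witness appears; this gives a $Z$-computable choice of \emph{some} valid level, which is all the rest of the argument uses. With that phrasing fixed, the construction is sound: each $P_{e+1}$ is a nonempty $\Pi^0_1(Z)$ subclass of $[T]$ represented by a $Z$-computable subtree of $T$ (hence still $Z$-computably bounded), compactness gives a path through the intersection, and the three subcases (forcing divergence, forcing a wrong value, or $\Phi_e^{Z\oplus X}$ computing a single $Z$-computable function which therefore differs from $W$) cover all possibilities.
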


We refer readers to Simpson's book \cite{Simpson:1999.SOSOA} for more notions of reverse mathematics and Soare's book \cite{Soare:1987.book} for computability theoretic notions.

\section{Rainbows for Colorings of Pairs}\label{sect:pairs}

In this section, we prove the following cone avoidance theorem for $2$-bounded colorings of pairs. Note that there is no computability theoretic condition on $f$. In this sense, the following theorem is an analogous of Seetapun's cone avoidance for infinite pigeonhole principle (Corollary \ref{cor:Seetapun}). Actually, the proof needs inductively applications of Corollary \ref{cor:Seetapun}.

\begin{theorem}\label{thm:rainbow-for-pairs}
Suppose that $W \not\leq_T Z$. Then every $2$-bounded $f: [\omega]^2 \to \omega$ admits an infinite rainbow $H$ such that $W \not\leq_T H \oplus Z$.
\end{theorem}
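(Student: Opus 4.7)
The plan is to build $H$ by a Mathias-style forcing adapted to the rainbow setting. A condition is a pair $(\sigma, X)$ where $\sigma$ is a finite $f$-rainbow, $X \in [\omega]^\omega$ with $\min X > \max \sigma$, and $\sigma \cup F$ is an $f$-rainbow for every finite $F \subseteq X$ (equivalently, $\sigma \cup X$ is an $f$-rainbow). Such conditions exist: one may start from $(\emptyset, X_0)$ for any classically guaranteed infinite $f$-rainbow $X_0$. The condition is stable under the basic extension $(\sigma, X) \mapsto (\sigma\{x\}, X \setminus [0,x])$ for $x \in X$, and this secures infinity of $H$ by alternate-stage extensions. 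No complexity requirement is placed on $X$; all the cone-avoidance content will be extracted from the inductive construction.

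The substantive step is the cone-avoidance extension lemma for a Turing functional $\Phi_e$. Given a condition $(\sigma, X)$, I want to extend it so that either (a) some finite rainbow $\sigma' \supseteq \sigma$ with $\sigma' \setminus \sigma \subseteq X$ satisfies $\Phi_e(Z \oplus \sigma'; n) \downarrow \neq W(n)$ for some $n$, or (b) there exist an infinite $X' \subseteq X$ (with $(\sigma, X')$ still a condition) and some $n$ such that $\Phi_e(Z \oplus \sigma\tau; n) \uparrow$ for every rainbow $\tau \subseteq X'$. Case (a) forces disagreement on the nose, and case (b) forces divergence. The real obstacle is case (b): the usual Seetapun-style derivation---from the failure of (a), search over finite rainbow extensions to compute $W$ from $Z$---is blocked here, because $f$ may have arbitrary complexity and checking the rainbow condition on $\sigma'$ is not $Z$-computable.

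To handle case (b) I would combine three ingredients. First, a Csima--Mileti style measure-theoretic estimate: because $f$ is $2$-bounded, the $f$-conflict relation on pairs is a matching, so sufficiently sparse subsets of $X$ remain $f$-rainbows with controlled positive density. Second, an iterated application of Corollary \ref{cor:Seetapun}: for each input $n$ and each length $k$, partition $X$ by whether an element $x$ appears in some length-$k$ rainbow extension of $\sigma$ on which $\Phi_e(Z \oplus \cdot\,; n)$ converges, and pass to a cone-avoiding infinite monochromatic piece. Third, a Seetapun-style genericity argument packaging all requirements into a single $H$. If at some layer the pigeonhole outputs the \emph{non-participating} color, we obtain the $X'$ and $n$ witnessing case (b); otherwise the Csima--Mileti density estimate converts the uniform \emph{participating} homogeneity into enough convergent rainbow witnesses that $W$ becomes $Z$-computable without an oracle for $f$, contradicting $W \not\leq_T Z$.

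The main obstacle will be the interplay between these non-computable thinnings and the preservation of the rainbow-reservoir condition across the iteration. Each layer's reservoir is only infinite, not $Z$-computable, so its rainbow extensions cannot be enumerated by $Z$ directly; the $2$-boundedness of $f$, through the Csima--Mileti density estimate, is precisely what makes the \emph{participating} case effective enough to recover $W$ from $Z$ and what lets the reservoir condition survive every thinning.
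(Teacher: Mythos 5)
Your forcing notion is set up with the invariant ``$\sigma\cup X$ is an $f$-rainbow,'' which is strictly stronger than what the paper maintains (the paper requires only that $\sigma\{x\}$ be a rainbow for each $x\in X$, i.e.\ $X\subseteq V(\sigma,f)$, not that $X$ itself be a rainbow). This stronger invariant creates a circularity at the very first step: to seed the forcing with $(\emptyset,X_0)$ you need $X_0$ to be an infinite $f$-rainbow, and for the rest of the argument to go through you also need $W\not\leq_T Z\oplus X_0$ --- but a cone-avoiding infinite $f$-rainbow is precisely what the theorem is asserting. The alternative you suggest, ``no complexity requirement on $X$,'' cannot be right either: in the dichotomy between (a), (b), and ``all convergent extensions agree with $W$ and some always converges,'' the third case yields only $W\leq_T Z\oplus X$, so unless you maintain $W\not\leq_T Z\oplus X$ as part of admissibility there is no contradiction and no way to extend. (Note also a symptom of this confusion: with your strong invariant, every $\tau\in[X]^{<\omega}$ automatically gives $\sigma\tau$ a rainbow, so the concern you flag about ``checking the rainbow condition on $\sigma'$ is not $Z$-computable'' does not arise in your own setup --- it is a genuine obstacle only for the paper's weaker invariant.)

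The paper resolves this by working with the weaker invariant, which allows the trivial initial condition $(\emptyset,\omega)$ after first passing (Lemma~\ref{lem:tail-rainbow-random}) to the case that $\omega$ is a \emph{tail} rainbow. The price is that rainbowness of $\sigma\cup X$ is no longer guaranteed, so Seetapun's search over extensions cannot consult $f$. The ingredient you are missing is the $\Pi^0_1$ class $\mathcal{A}$ of \emph{normal} colorings (Definition~\ref{dfn:r-coloring}) together with the Jockusch--Soare cone avoidance for $\Pi^0_1$ classes (Theorem~\ref{thm:Jockusch-Soare}). Rather than asking an $f$-dependent question, one asks whether some $g\in\mathcal{A}_{\sigma,X}$ has the $\Pi^{Z\oplus X\oplus g}_1$ property that $g$-rainbows $(e,Z)$-split with low probability --- a $\Pi^{Z\oplus X}_1$ question about a $\Pi^0_1$ class. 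If yes, Jockusch--Soare gives a cone-avoiding such $g$, and the Ku\v{c}era/Csima--Mileti measure estimate on a carefully bounded tree $T(\sigma,X,g)$ yields a cone-avoiding reservoir forcing divergence. If no, then $f$ in particular splits with high probability, yielding many concrete splitting extensions $\tau_i$, and then a single application of Corollary~\ref{cor:Seetapun} (not an open-ended iteration) chooses one $\tau_i$ whose viability class $V(\sigma\tau_i,f)$ receives a cone-avoiding infinite subset. Your ``participating/non-participating'' partition applied to $X$ directly does not substitute for this: in the ``participating'' outcome you would need to recover $W$ from $Z$ alone, but the search for a convergent rainbow extension needs either $f$ (to certify rainbowness) or the reservoir $X$ (which you have just thinned to an $f$-dependent set), and neither gives $W\leq_T Z$. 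The $\Pi^0_1$-class detour is exactly what converts this into an honest $\Sigma^Z_1$/ $\Pi^Z_1$ dichotomy.
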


Below we prove the above theorem. The proof consists of two main steps: firstly we pass from $\omega$ to an infinite tail rainbow $X$ such that $W \not\leq_T X \oplus Z$; then we prove the theorem for $f$ with $\omega$ being a tail rainbow. A set $X$ is a \emph{tail rainbow} for $f$, if $f(x_0,x_1) \neq f(y_0,y_1)$ for all $(x_0,x_1),(y_0,y_1) \in [X]^2$ with $x_1 \neq y_1$.

From \cite[Proposition 3.3]{Csima.Mileti:2009.rainbow}, we learn that the first step is easy for computable $f$. However, as now we are dealing with arbitrary colorings, we need some measure theoretic argument, which is essentially from \cite{Csima.Mileti:2009.rainbow}.

\begin{lemma}\label{lem:tail-rainbow-random}
If $f: [\omega]^2 \to \omega$ is $2$-bounded and $R$ is Martin-L\"{o}f random in $f$ then $f$ admits an infinite tail rainbow computable in $R$.

In particular, if $W \not\leq_T Z$ then $f$ admits an infinite tail rainbow $X$ with $W \not\leq_T X \oplus Z$.
\end{lemma}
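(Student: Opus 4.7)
The plan is to follow the measure-theoretic strategy of Csima and Mileti \cite{Csima.Mileti:2009.rainbow}, adapted to the tail-rainbow setting. Fix a computable sequence of pairwise disjoint intervals $I_0 < I_1 < I_2 < \cdots \subseteq \omega$ with $|I_k|$ growing sufficiently fast (say $|I_k| = 2^{2k}$), and use consecutive blocks of bits of $R$ to pick $x_k \in I_k$ uniformly at random. The resulting sequence $X = \{x_k : k \in \omega\}$ is computable from $R$. The task is then to verify, via a measure estimate, that for every $2$-bounded $f$ the $R$'s for which a tail of $X$ fails to be a tail rainbow of $f$ form a $\Sigma^0_1(f)$ null class, so that Martin-L\"of randomness of $R$ in $f$ guarantees success after a finite truncation.

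The core estimate: call a quadruple $(i,j,k,l)$ with $i<j$, $k<l$, $j \neq l$ \emph{bad} if $f(x_i,x_j) = f(x_k,x_l)$. The $2$-boundedness of $f$ is used decisively: once $(x_i,x_j)$ is fixed, there is at most one other pair $(a,b) \in f^{-1}(f(x_i,x_j)) \setminus \{(x_i,x_j)\}$, so the bad event forces $(x_k,x_l) = (a,b)$. Enumerating twin pairs $\{(a,b),(c,d)\}$ with $f(a,b) = f(c,d)$ and $b \neq d$ according to which intervals their four endpoints lie in, splitting cases by how $\{i,j\}$ and $\{k,l\}$ overlap, and summing the resulting probabilities, one checks that with fast-enough $|I_k|$ the total probability of some bad quadruple using only indices $\geq n$ tends to $0$ as $n \to \infty$. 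This produces a Martin-L\"of test $\{U_n\}_{n \in \omega}$ in $f$, and since $R$ is Martin-L\"of random in $f$ we obtain $R \notin U_{n_0}$ for some $n_0$; then $\{x_k : k \geq n_0\}$ is an infinite tail rainbow of $f$, computable from $R$.

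For the ``in particular'' clause, given $W \not\leq_T Z$, Sacks's measure-theoretic theorem tells us that $\{R : W \leq_T R \oplus Z\}$ has measure zero. Intersecting with the measure-one class of $R$'s Martin-L\"of random in $f$, we find some $R$ meeting both conditions. The corresponding tail rainbow $X$ has $X \leq_T R$, hence $X \oplus Z \leq_T R \oplus Z$, so $W \not\leq_T X \oplus Z$ as desired.

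The main technical obstacle is the probability estimate in the second paragraph: one must enumerate twin pairs by the interval-placement of their endpoints and exploit $2$-boundedness to pin down the partner uniquely, so that each bad quadruple contributes a product $|I_{i_a}|^{-1}|I_{i_b}|^{-1}|I_{i_c}|^{-1}|I_{i_d}|^{-1}$ (or a corresponding term when some indices coincide), and the resulting grand sum over quadruples with all indices $\geq n$ is summable in $n$. Choosing the interval sizes aggressively enough to absorb the combinatorial count of twin pairs contributing to each quadruple of indices is the delicate bookkeeping step.
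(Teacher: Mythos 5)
Your proposal follows essentially the same measure-theoretic route as the paper: select $x_k$ uniformly from a fast-growing power-of-two-sized interval using blocks of bits of $R$, use $2$-boundedness to argue that the ``twin'' of a fixed pair is unique so bad extensions are rare, conclude that the failure set is small, and invoke randomness of $R$ relative to $f$; the ``in particular'' clause via Sacks's null-class theorem is identical.

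The one place where your route differs in flavor (though not in substance) is how you invoke randomness: you set up a Martin-L\"of test $\{U_n\}$ capturing the event that the tail $\{x_k:k\geq n\}$ is not a tail rainbow, whereas the paper packages the good outcomes into a single $\Pi^f_1$ class of measure at least $1/2$ and cites Ku\v{c}era's basis theorem; these are standard dual presentations and either is fine. I would, however, warn you away from the ``sum over all bad quadruples $(i,j,k,l)$'' union bound you sketch as the main estimate. That requires a case analysis over the possible overlaps of $\{i,j\}$ and $\{k,l\}$ and over whether the uniquely-determined twin $(a,b)$ lands in the right intervals, which is the ``delicate bookkeeping'' you flag but do not carry out. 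A cleaner way to close the gap, and the one the paper actually uses, is a one-step conditional count: given that $x_n,\ldots,x_{m-1}$ is a tail rainbow, there are at most $\binom{m}{2}$ choices of $x_m\in I_m$ destroying the property (one per earlier pair, by $2$-boundedness and the tail-rainbow asymmetry), so choosing $|I_m|\geq 2^m\binom{m}{2}$ gives conditional failure probability $\leq 2^{-m}$ and $\mu(U_n)\leq\sum_{m>n}2^{-m}=2^{-n}$ with no case analysis. You should replace the quadruple union bound by this step-by-step estimate; as written the proposal asserts the bound without establishing it.
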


\begin{proof}
Let $h(k) = k$ for $k \leq 2$. For $k > 2$, let
$$
    h(k) = h(k-1) + \min \{2^m: 2^m \geq 2^{k-1} \frac{(k-1)(k-2)}{2}\}.
$$
Let $S$ be the set of $\sigma \in [\omega]^{<\omega}$ such that $h(k) \leq \sigma(k) < h(k+1)$ for all $k < |\sigma|$, and let
$$
    T = \{\sigma \in S: \sigma \text{ is a tail rainbow for } f\}.
$$

Given any $\sigma \in [\omega]^{<\omega}$, we have
$$
    |f([\sigma]^2)| \leq |[\sigma]^2| = \frac{|\sigma|(|\sigma|-1)}{2}.
$$
By $2$-boundedness of $f$,
$$
    |\{x > \max \sigma: \exists w (w < x \wedge f(w,x) \in f([\sigma]^2))\}| \leq |f([\sigma]^2)| \leq \frac{|\sigma|(|\sigma|-1)}{2}.
$$
Thus, if $\sigma \in T$ then
$$
    |\{x: \sigma \{x\} \in T\}| \geq (1 - 2^{-|\sigma|}) |\{x: \sigma \{x\} \in S\}|.
$$
It follows that, for all $k$,
$$
    |T \cap [\omega]^k| \geq 2^{-1} |S \cap [\omega]^k|.
$$

By the definition of $S$, we can computably map $[S]$ onto $2^{\omega}$ by computably mapping $S$ to $2^{<\omega}$ as following: if $\sigma \in S$ is mapped to $\nu \in 2^{<\omega}$, then $\sigma \{x\} \in S$ is mapped to $\nu \xi$, so that $x$ is the $(\sum_{\xi(i) = 1} 2^i)$-th number $\geq h(|\sigma|)$. Under such mapping, $[T]$ is mapped onto a $\Pi^f_1$ subclass of $2^{\omega}$ with positive measure. By the corollary of Lemma 3 in Ku\v{c}era \cite{Kucera:85}, every $R$ which is Martin-L\"{o}f random in $f$, computes some $X \in [T]$ which is a tail $f$-rainbow.

On the other hand, if $W \not\leq_T Z$ then $\{Y \in 2^\omega: W \leq_T Y \oplus Z\}$ is null. So we can pick $R$ and $X \in [T]$ such that $R$ is Martin-L\"{o}f random in $f$, $X \leq_T R$ and $W \not\leq_T R \oplus Z$.
\end{proof}

With Lemma \ref{lem:tail-rainbow-random}, we proceed to the second step.

\begin{lemma}\label{lem:rainbow-for-pairs}
Suppose that $W \not\leq_T Z$. If $f$ is a $2$-bounded coloring of pairs with $\omega$ being a tail rainbow, then there exists an infinite $f$-rainbow $G$ such that $W \not\leq_T G \oplus Z$.
\end{lemma}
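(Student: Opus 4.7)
The plan is to build the rainbow $G$ by Mathias-style forcing in Seetapun's style, combining the tail-rainbow hypothesis with iterated applications of Corollary \ref{cor:Seetapun}. A condition is a pair $(\sigma, X)$ where $\sigma$ is a finite $f$-rainbow, $X$ is infinite with $\min X > \max \sigma$ and $W \not\leq_T X \oplus Z$, and $\sigma\{x\}$ is an $f$-rainbow for every $x \in X$. A generic descending sequence of such conditions produces $G = \bigcup_n \sigma_n$ as an infinite $f$-rainbow; cone avoidance $W \not\leq_T G \oplus Z$ will follow from a density lemma forcing $\Phi_e^G \neq W$ for each $e$.

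The first density lemma extends $\sigma$ by a single element while preserving the condition. By $2$-boundedness and the tail-rainbow hypothesis, for each $x < y$ there is at most one partner $p_y(x) \neq x$ below $y$ with $f(x, y) = f(p_y(x), y)$. To extend $(\sigma, X)$ by some candidate $x \in X$, the new reservoir $X' \subseteq X \cap (x, \infty)$ must avoid $B_a(x) = \{y > x : p_y(x) = a\}$ for each $a \in \sigma$. I partition $X \cap (x, \infty)$ into the disjoint sets $B_a(x)$ for $a \in \sigma$ and the residual $R(x)$, and apply Corollary \ref{cor:Seetapun}. If the cone-avoiding infinite piece is $R(x)$, set $X' = R(x)$ and we are done. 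If instead it is some $B_{a_1}(x)$, shrink the reservoir to $B_{a_1}(x)$ and retry with a fresh candidate; since $2$-boundedness forbids three distinct pairs from sharing a color at any common $y$, we have $B_{a_1}(x') \cap B_{a_1}(x) = \emptyset$ for any $x' \in B_{a_1}(x)$, so $a_1$ is permanently eliminated from the list of possible partners. After at most $|\sigma|$ iterations every $a \in \sigma$ is eliminated, so Corollary \ref{cor:Seetapun} must return the residual and the extension succeeds.

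The second density lemma forces $\Phi_e^G \neq W$ via the Seetapun dichotomy: either some valid $\sigma\tau$ with $\tau \subseteq X$ and some $n$ satisfy $\Phi_e(\sigma\tau; n) \downarrow \neq W(n)$ and we extend along it; or every convergent $\Phi_e(\sigma\tau; n)$ with $\sigma\tau$ a rainbow agrees with $W(n)$. The main obstacle is that, because $f$ has arbitrary complexity, the rainbow condition on multi-element extensions is not a priori $X \oplus Z$-decidable, so the standard Seetapun argument does not immediately conclude. I plan to circumvent this by further refining $X$ via additional applications of Corollary \ref{cor:Seetapun} so that the partner structure of $\sigma$ over the reservoir becomes transparent and "$\sigma\tau$ is an $f$-rainbow" for $\tau \subseteq X$ becomes $X \oplus Z$-c.e. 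Then the usual search argument either finds an unreachable $n$ (and we force $\Phi_e^G(n) \uparrow$) or computes $W$ from $X \oplus Z$, contradicting cone avoidance. This use of iterated pigeonhole is the key point highlighted in the introduction, drawing on the full strength of Dzhafarov and Jockusch's strengthening of Seetapun's theorem.
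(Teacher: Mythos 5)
Your overall framework is on target: Mathias forcing in Seetapun's style, conditions $(\sigma, X)$ with $\sigma$ a finite rainbow, $X \subseteq V(\sigma, f)$, and $W \not\leq_T Z \oplus X$, and two density lemmas. Your first density lemma (the finite-extension step) also works, though iterated applications of Corollary~\ref{cor:Seetapun} are not needed: the paper's Lemma~\ref{lem:finite-ext} takes $l+1 = |\sigma|+1$ candidate elements $x_0, \dots, x_l$ at once and partitions $X$ by the least viable candidate; since $\sigma\{y\}$ is a rainbow, the $l$ colors $\{f(a,y) : a \in \sigma\}$ are distinct, so at most $l$ candidates collide with $\sigma$, and a single application of Corollary~\ref{cor:Seetapun} suffices. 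Your iterated-elimination argument is correct but does more work.

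The second density lemma is where the proposal has a genuine gap. You correctly identify the obstacle: since $f$ is of arbitrary complexity, whether $\sigma\tau$ is an $f$-rainbow for $|\tau| \geq 2$, $\tau \subseteq X$, is not $X \oplus Z$-decidable. But the proposed repair---refining $X$ so that ``the partner structure of $\sigma$ over the reservoir becomes transparent''---does not address the actual source of undecidability. The collisions that matter for multi-element extensions are between pairs $(x,y)$ and $(x',y)$ with \emph{both} $x, x' \in X$ (by the tail-rainbow hypothesis, all collisions share their second coordinate). Knowing which elements of $X$ collide with elements of $\sigma$ says nothing about collisions internal to $X$; making those transparent would amount to knowing $f$ on $[X]^2$, which is exactly what you cannot assume without circularity. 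The paper circumvents this by introducing a $\Pi^0_1$ class $\mathcal{A}$ of \emph{normal} colorings that captures the rainbow structure of $f$ in effective form, then posing the Seetapun question not about $f$ but about the existence of some $g \in \mathcal{A}_{\sigma,X}$ whose rainbows $(e,Z)$-split with low probability over a suitably bounded tree $T(\sigma,X,g)$. In the ``yes'' case one applies the Jockusch--Soare cone-avoidance theorem for $\Pi^0_1$ classes to pick a cone-avoiding $g$, and then a Ku\v{c}era/Csima--Mileti measure argument to pick a cone-avoiding path $Y \in [T]$ through the non-splitting part; in the ``no'' case, $f$ itself must have a bushy set of $(e,Z)$-splitting extensions in $T(\sigma,X,f)$, and Corollary~\ref{cor:Seetapun} is applied to the viability partition over those extensions. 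This $\Pi^0_1$-class-plus-measure machinery is the essential idea for getting around the arbitrary complexity of $f$; without some version of it, the usual Seetapun search argument cannot close.
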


Below, we fix a coloring $f$ as in the above lemma and build a desired rainbow by Mathias forcing. The plan is as following:
\begin{enumerate}
    \item As $f$ is of arbitrary complexity, we can not directly consult $f$ in the construction. So, we define a $\Pi^0_1$ class (say $\mathcal{A}$) which captures $f$ in some sense.
    \item Instead of asking questions about $f$, we ask questions like: whether $\mathcal{A}$ contains some element satisfying certain $\Pi^Z_1$ property (say $\varphi$). Roughly, $\varphi(g)$ holds for a coloring $g \in \mathcal{A}$, if in a measure theoretic sense most $g$-rainbows do not contain splitting computations.
    \item If $\mathcal{A}$ does contain such an element, then we can pick a cone-avoiding $g \in \mathcal{A}$ by Jockusch-Soare's Theorem \ref{thm:Jockusch-Soare}. By a measure theoretic argument like \cite[\S 3]{Csima.Mileti:2009.rainbow}, we can obtain a cone-avoiding $g$-rainbow which contains no splitting computations. With this rainbow, we can extend a given Mathias condition to one which forces some $\Pi^Z_1$ statement (e.g., $\Phi_e(Z \oplus G; x) \uparrow$) and has desirable complexity.
    \item Otherwise, $f$ is a particular element of $\mathcal{A}$ which satisfies a $\Sigma^Z_1$ property ($\neg \varphi$). From this fact, we can extend a given Mathias condition in a finitary way to force a $\Sigma^Z_1$ statement, like $\Phi_e(Z \oplus G; x) \downarrow \neq W(x)$.
\end{enumerate}
People who are familiar with \cite{Seetapun.Slaman:1995.Ramsey} can find that the above plan is a variation of Seetapun's argument.

We begin with the definition of $\mathcal{A}$.

\begin{definition}\label{dfn:r-coloring}
Let $g: [\omega]^2 \to \omega$ be a $2$-bounded coloring. If
$$
    \forall (x_0,x_1) \in [\omega]^2(g(x_0, x_1) = \<\tilde{g}(x_0,x_1),x_1\>)
$$
where $\tilde{g}(x_0,x_1) = \min\{x \leq x_0: g(x,x_1) = g(x_0,x_1)\}$, then $g$ is \emph{normal}.

Let $\mathcal{A}$ be the set of all normal $2$-bounded colorings of $[\omega]^2$.
\end{definition}

Note that in this paper the notion of normal colorings is different from Csima and Mileti \cite{Csima.Mileti:2009.rainbow}. Let us briefly justify our terminology. If $g$ is a $2$-bounded coloring of pairs with $\omega$ being a tail rainbow, then we can define $\tilde{g}$ from $g$ as in the above definition and let
$$
    \bar{g}(x_0,x_1) = \<\tilde{g}(x_0,x_1), x_1\>.
$$
If $\bar{g}(x,x_1) = \bar{g}(y,x_1) = \<w,x_1\>$ and $x < y$, then $w = \tilde{g}(x,x_1) = \tilde{g}(y, x_1) \leq x$ and thus $g(w,x_1) = g(x,x_1) = g(y,x_1)$. By $2$-boundedness of $g$, $w = x$ and thus we have $2$-boundedness of $\bar{g}$ and $\bar{g} \in \mathcal{A}$. It also follows that $g$- and $\bar{g}$-rainbows coincide. On the other hand, suppose that $h_0, h_1 \in \mathcal{A}$ and $h_0(x,y) < h_1(x,y)$. Then $h_0(x,y) < h_1(x,y) \leq \<x,y\>$ and $h_0(x,y) = \<u,y\>$ for some $u < x$. As $h_0$ and $h_1$ are normal, $\{u,x,y\}$ is a rainbow for $h_1$ but not for $h_0$. So, every coloring like $f$ is equivalent to a unique element of $\mathcal{A}$, in the sense that they have same rainbows.

Hence, we can safely assume that $f \in \mathcal{A}$. Using some effective coding, $\mathcal{A}$ can be identified with a $\Pi^0_1$ subset of Cantor space.

We proceed to define a suitable subset of Mathias conditions. Let us begin from recalling standard Mathias forcing.

\begin{definition}\label{dfn:Mathias}
A \emph{Mathias condition} is a pair $(\sigma, X) \in [\omega]^{<\omega} \times [\omega]^{\omega}$ such that $\max \sigma < \min X$. For each Mathias condition $(\sigma, X)$, let
$$
    B(\sigma, X) = \{S \in [\omega]^\omega: \sigma \subseteq S \subseteq \sigma \cup X\}.
$$
Given two Mathias conditions $(\sigma, X)$ and $(\tau, Y)$, $(\tau, Y) \leq_M (\sigma, X)$ if and only if $B(\tau, Y) \subseteq B(\sigma, X)$.

Let $(\sigma, X)$ be a Mathias condition. We write $(\sigma, X)\Vdash \Phi_e(\dot{Z} \oplus \dot{G}) \neq \dot{W}$, if and only if $\Phi_e(Z \oplus G) \neq W$ for every $G \in B(\sigma, X)$.
\end{definition}

We need to find a descending sequence of Mathias condition $((\sigma_n, X_n): n \in \omega)$ such that $G = \bigcup_n \sigma_n$ is an infinite $f$-rainbow and $(\sigma_n, X_n) \Vdash \Phi_n(\dot{Z} \oplus \dot{G}) \neq \dot{W}$. For  forcing incomputability requirements, it is natural to require that $W \not\leq_T Z \oplus X_n$. For rainbow requirement, we need some further restriction on the infinite tails of Mathias conditions.

For $\sigma \in [\omega]^{<\omega}$ and $g \in \mathcal{A}$, let the set below collect all \emph{viable numbers}:
$$
    V(\sigma,g) = \{x > \max \sigma: \sigma \{x\} \text{ is a rainbow for }g\}.
$$
For a Mathias condition $(\sigma, X)$, let
$$
    \mathcal{A}_{\sigma,X} = \{g \in \mathcal{A}: X \subseteq V(\sigma, g)\} \in \Pi^X_1.
$$
To satisfy rainbow requirement, we should work with $(\sigma, X)$ having $f \in \mathcal{A}_{\sigma,X}$. Colorings in $\mathcal{A}_{\sigma,X}$ are considered as \emph{acceptable}.

\begin{definition}\label{dfn:Mathias-adm}
A Mathias condition $(\sigma, X)$ is \emph{admissible}, if and only if $f \in \mathcal{A}_{\sigma, X}$ and $W \not\leq_T Z \oplus X$.
\end{definition}

If (3) in the above plan holds for some $g \in \mathcal{A}_{\sigma,X}$, we need to pass from $X$ to a $g$-rainbow with desirable complexity. To this end, we define a tree of $g$-rainbows.

Let $b_0 = 1$. For each $l > 0$, let $b_l = \min\{2^b: 2^b \geq 2^{l+3}(|\sigma| + l)\}$.

\begin{definition}
Let $(\sigma, X)$ be a Mathias condition. If $g \in \mathcal{A}_{\sigma, X}$, then we associate to $(\sigma, X, g)$ a tree, denoted by $T(\sigma, X, g) \subseteq [\omega]^{<\omega}$, which is defined by induction as below:
\begin{enumerate}
    \item $\emptyset \in T(\sigma, X, g)$.
    \item Suppose that $\tau \in T(\sigma, X, g) \cap [\omega]^l$. If $x$ is among the least $b_l$ elements in $V(\sigma \tau, g) \cap X$ then $\tau \{x\} \in T(\sigma, X, g)$.
\end{enumerate}
\end{definition}

Some facts about $T(\sigma, X, g)$ follow easily from the definition:
\begin{itemize}
    \item[(T1)] $T(\sigma, X, g)$ is $(g \oplus X)$-computable uniformly in $(\sigma, X, g)$.
    \item[(T2)] $[T(\sigma, X, g)]$ is a compact subset of Baire Space $\omega^{\omega}$.
    \item[(T3)] $|T(\sigma, X, g) \cap [\omega]^l| \leq \bar{b}_l = \prod_{i < l} b_i$ for each $l > 0$.
    \item[(T4)] $\sigma \tau$ is a $g$-rainbow for each $\tau \in T(\sigma, X, g)$.
\end{itemize}

$T(\sigma,X,g)$ is similar to $T_f$ in \cite[\S 3]{Csima.Mileti:2009.rainbow}. But $T(\sigma,X,g)$ is compact, while $T_f$ in \cite[\S 3]{Csima.Mileti:2009.rainbow} is very wild. The compactness is needed in Lemma \ref{lem:cone-avoidance} below. We may prove that $T(\sigma, X, g)$ is bushy in a way similar to \cite[\S 3]{Csima.Mileti:2009.rainbow}. However, we shall need some different calculation.

\begin{lemma}\label{lem:rb-cntng}
Suppose that $(\sigma,X)$ is a Mathias condition and $g \in \mathcal{A}_{\sigma,X}$. For each $l$, let $y \in X$ be such that $y > \max \{\max \tau: \tau \in T(\sigma, X, g) \cap [\omega]^{l}\}$. Then
$$
    |\{\tau \in T(\sigma, X, g) \cap [\omega]^{l}: y \not\in V(\sigma \tau, g)\}| < \frac{\bar{b}_l}{4}.
$$
\end{lemma}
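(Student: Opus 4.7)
The plan is to reduce the condition $y \notin V(\sigma\tau, g)$ to a simple twin-counting problem and then telescope the bad fraction through the levels of $T(\sigma, X, g)$.

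First, I would characterize the failure. By (T4), $\sigma\tau$ is already a $g$-rainbow, and $y > \max(\sigma\tau)$ by hypothesis, so $\sigma\tau\{y\}$ fails to be a rainbow exactly when some pair involving $y$ collides in color with another pair. Here the normality of $g$ (Definition \ref{dfn:r-coloring}) is crucial: since $g(a, y) = \<\tilde g(a,y), y\>$ carries $y$ in the second coordinate of the code, any collision partner of $(c, y)$ is forced to have the form $(c', y)$. Hence
\[
y \notin V(\sigma\tau, g) \iff \exists\, a \neq b \in \sigma \cup \tau \text{ with } g(a, y) = g(b, y),
\]
and the hypothesis $y \in X \subseteq V(\sigma, g)$ (built into $g \in \mathcal{A}_{\sigma, X}$) rules out the case $a, b \in \sigma$, so any such $y$-twin pair must meet $\tau$.

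Second, I would carry out a level-by-level count. Fix a good node $\tau_k \in T(\sigma, X, g) \cap [\omega]^k$, meaning $y \in V(\sigma\tau_k, g)$. A child $\tau_k\{x\}$ is bad precisely when $x$ is a $y$-twin of some $a \in \sigma\tau_k$, i.e.\ $g(x, y) = g(a, y)$. By $2$-boundedness of $g$ each such $a$ admits at most one $y$-twin, so at most $|\sigma|+k$ of the $b_k$ children of $\tau_k$ are bad. A bad parent contributes only bad children, since adding $x < y$ cannot repair an existing color collision at $y$. Letting $n_l$ count bad $\tau \in T(\sigma, X, g) \cap [\omega]^l$ and using (T3), we obtain
\[
n_{l+1} \leq n_l\, b_l + (\bar b_l - n_l)(|\sigma|+l) \leq n_l\, b_l + \bar b_l(|\sigma|+l),
\]
so after dividing by $\bar b_{l+1} = \bar b_l\, b_l$ the bad fraction $p_l := n_l/\bar b_l$ satisfies the recurrence $p_{l+1} \leq p_l + (|\sigma|+l)/b_l$, and the choice of $b_l$ makes this increment at most $2^{-(l+3)}$.

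Starting from the trivial base $p_0 = 0$ (since $y \in X \subseteq V(\sigma, g)$ makes $\emptyset$ good) and telescoping yields $p_l < \sum_{k \geq 0} 2^{-(k+3)} = 1/4$, that is, $n_l < \bar b_l / 4$, as required. The main technical point, in my view, is the careful accounting at the lowest levels, where the ratio $(|\sigma|+l)/b_l$ is largest; one must verify that the initial terms really do fit inside the geometric tail so that the total stays strictly below $1/4$. Beyond this, the entire argument hinges on the two structural properties of $g$: normality, which localizes color collisions to pairs sharing the same top element, and $2$-boundedness, which caps $y$-twin multiplicity at one.
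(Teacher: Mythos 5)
Your argument is the same as the paper's in all essentials: you split the bad nodes at level $l+1$ into those under bad parents (giving the $n_l b_l$ term) and those under good parents (each good parent contributing at most $|\sigma|+l$ bad children, which is exactly what normality together with $2$-boundedness gives — a child $\tau\{x\}$ of a good $\tau$ goes bad iff $g(x,y)=g(a,y)$ for some $a \in \sigma\tau$, and each $a$ has at most one $y$-twin). The only presentational difference is bookkeeping: you divide by $\bar b_{l+1}$ and telescope the fractions $p_l$, whereas the paper carries the strengthened invariant $|N_{l,y}| < \bigl(\tfrac14 - 2^{-(l+2)}\bigr)\bar b_l$, which is the same geometric tail written as a running partial sum.

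The "careful accounting at the lowest levels" that you defer is not merely a computation to check — it is where your derivation actually breaks, and you should not leave it unresolved. With $b_0=1$ as defined in the paper, the increment at $l=0$ is $(|\sigma|+0)/b_0 = |\sigma|$, which exceeds $2^{-3}$ as soon as $\sigma\neq\emptyset$; from $p_0=0$ you can only conclude $p_1\le|\sigma|$, which can already be $1$, and the telescope never gets started. (The paper's own inductive step from $l=0$ to $l=1$ uses $(|\sigma|+0)\le b_0/2^{3}$ and therefore has the same issue.) The repair is to let $b_0$ also grow with $|\sigma|$ — apply the $l>0$ formula at $l=0$ as well, i.e.\ $b_0=\min\{2^b : 2^b\ge 2^{3}|\sigma|\}$ when $|\sigma|>0$ — after which $(|\sigma|+l)/b_l\le 2^{-(l+3)}$ holds for every $l\ge 0$ and your telescope gives $p_l<\sum_{k\ge 0}2^{-(k+3)}=\tfrac14$ as you wanted. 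So: same route as the paper, and you correctly located the delicate spot, but the deferred check is not a routine verification — it requires adjusting the definition of $b_0$, and as written your base step does not go through for nonempty $\sigma$.
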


\begin{proof}
Let $T = T(\sigma, X, g)$. For non-empty $\tau \in [X]^{< \omega}$, let $x_{\tau} = \max \tau$ and $\tau^- = \tau - \{x_\tau\}$. For each $l$ and $y$, let
$$
    N_{l,y} = \{\tau \in T \cap [\omega]^{l}: y \not\in V(\sigma \tau, g)\}.
$$
We show by induction on $l$ that if $y \in X$ and $y > \max \{x_\tau: \tau \in T \cap [\omega]^{l}\}$ then
$$
    |N_{l,y}| < (\frac{1}{4} - \frac{1}{2^{l+2}}) \bar{b}_l.
$$

Let $y \in X$ be such that $y > \max \{x_\tau: \tau \in T \cap [\omega]^{l+1}\}$, and let
$$
    N_{l+1,y,0} = \{\tau \in T \cap [\omega]^{l+1}: \tau^- \in N_{l,y}\}.
$$
Then $|N_{l+1,y,0}| \leq |N_{l,y}| b_{l}$, by the definition of $T$. Let
$$
    N_{l+1,y,1} = N_{l+1,y} - N_{l+1,y,0}.
$$
Suppose that $\rho \in T \cap [\omega]^l - N_{l,y}$ and $\rho \{x\} \in N_{l+1,y,1}$. Then $y \in V(\sigma \rho, g) - V(\sigma \rho \{x\}, g)$. As both $\sigma \rho \{x\}$ and $\sigma\rho\{y\}$ are $g$-rainbows, $g(x,y) = g(u,v)$ for some $(u,v) \in [\sigma \rho \{x,y\}]^2 - \{(x,y)\}$. As $\omega$ is a tail $g$-rainbow, $v = y$. So, $g(x,y) = g(u,y)$ for some $u \in \sigma \rho$. As $g$ is $2$-bounded, there are at most $|\sigma| + l$ many $x$'s such that $\rho\{x\} \in N_{l+1,y,1}$. So
$$
    |N_{l+1, y, 1}| \leq (|T \cap [\omega]^l| - |N_{l,y}|) (|\sigma| + l) \leq (\bar{b}_l - |N_{l,y}|)(|\sigma| + l).
$$

Hence,
\begin{align*}
    |N_{l+1,y}| & \leq |N_{l,y}| b_l + (\bar{b}_l - |N_{l,y}|) (|\sigma| + l) \\
    & \leq |N_{l,y}| b_l + \frac{1}{2^{l+3}} b_l(\bar{b}_l - |N_{l,y}|) \\
    & < \frac{\bar{b}_{l+1}}{2^{l+3}} + (1 - \frac{1}{2^{l+3}}) (\frac{1}{4} - \frac{1}{2^{l+2}}) \bar{b}_{l+1} \\
    & < (\frac{1}{4} - \frac{1}{2^{l+3}}) \bar{b}_{l + 1}.
\end{align*}

This proves the lemma.
\end{proof}

\begin{lemma}\label{lem:bushy}
Suppose that $(\sigma,X)$ is a Mathias condition and $g \in \mathcal{A}_{\sigma, X}$. Then
$$
    |T(\sigma, X, g) \cap [\omega]^l| > \frac{3}{4} \bar{b}_l \text{ for all } l.
$$
\end{lemma}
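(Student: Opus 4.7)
I propose to prove Lemma \ref{lem:bushy} by induction on $l$, in parallel with the structure of the proof of Lemma \ref{lem:rb-cntng}. The base case $l = 0$ is immediate since $T(\sigma, X, g) \cap [\omega]^0 = \{\emptyset\}$ and $\bar{b}_0 = 1$, giving $1 > 3/4$.

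For the inductive step, each $\tau \in T \cap [\omega]^l$ extends to $c_\tau = \min(b_l, |V(\sigma\tau, g) \cap X|)$ children in $T$; call $\tau$ \emph{bushy} if $c_\tau = b_l$ and \emph{deficient} otherwise. A deficient $\tau$ has $V(\sigma\tau, g) \cap X$ finite, so since $T \cap [\omega]^l$ is finite one can choose $y \in X$ that simultaneously exceeds $\max\{\max \tau' : \tau' \in T \cap [\omega]^l\}$ and $\max(V(\sigma\tau, g) \cap X)$ for every deficient $\tau$. For such $y$, every deficient $\tau$ lies in $N_{l, y}$, so Lemma \ref{lem:rb-cntng} bounds the count of deficient $\tau$'s by $\bar{b}_l / 4$. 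Summing the contribution of the bushy nodes yields
$$
    |T \cap [\omega]^{l+1}| \;\geq\; b_l\bigl(|T \cap [\omega]^l| - |\mathrm{deficient}|\bigr).
$$

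The main technical obstacle is closing the induction with the constant $3/4$: the naive combination of IH $|T \cap [\omega]^l| > 3 \bar{b}_l / 4$ with $|\mathrm{deficient}| < \bar{b}_l / 4$ only delivers $|T \cap [\omega]^{l+1}| > \bar{b}_{l+1}/2$, which is weaker than claimed. To reach $3/4$ I expect to need the sharper form $|N_{l, y}| < (1/4 - 2^{-(l+2)}) \bar{b}_l$ that appears inside the proof of Lemma \ref{lem:rb-cntng}, together with a strengthened inductive hypothesis of the form $|T \cap [\omega]^l| \geq (1 - \alpha_l) \bar{b}_l$ with a sequence $\alpha_l$ chosen so that the per-step loss from deficient nodes is absorbed by the geometric gain guaranteed by $b_l \geq 2^{l+3}(|\sigma| + l)$. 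One should also retain (rather than discard) the nonzero contribution $c_\tau$ of deficient $\tau$'s to tighten the estimate. Calibrating $\alpha_l$ to stay strictly below $1/4$ uniformly in $l$ is the delicate calculational crux.
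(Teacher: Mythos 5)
Your diagnosis is correct up to and including the observation that the naive induction only yields $\tfrac12\bar{b}_{l+1}$, but the patch you sketch will not save it. With a strengthened hypothesis $|T\cap[\omega]^l|\ge(1-\alpha_l)\bar{b}_l$, your step forces $\alpha_{l+1}\le\alpha_l+|N_{l,y}|/\bar{b}_l$, and even the sharper bound $|N_{l,y}|/\bar{b}_l<\tfrac14-2^{-(l+2)}$ tends \emph{up} to $\tfrac14$; since $\sum_l\bigl(\tfrac14-2^{-(l+2)}\bigr)$ diverges, no sequence $\alpha_l$ staying below $\tfrac14$ can absorb the increments. The structural reason is that $N_{l,y}$ is a \emph{cumulative} count: a node that goes deficient at level $j$ keeps inflating $|N_{l,y}|$ at every level $l>j$ through its whole subtree, and any induction keyed to $|T\cap[\omega]^l|$ (or to a per-level bushy/deficient split) pays that same debt over and over.

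The paper's one-line proof is shorthand for an induction on the quantity $|M_l|$ it introduces, $M_l=\{\tau\in T\cap[\omega]^l:V(\sigma\tau,g)\cap X\text{ infinite}\}$ (read the paper's ``$V(\sigma\tau,g)$ is infinite'' this way), and this choice is exactly what makes the deficit geometric. Since $M_l\subseteq T\cap[\omega]^l$, it suffices to show $|M_l|>\tfrac34\bar{b}_l$. Two facts close the loop where your recursion does not: first, $V(\sigma\tau,g)\subseteq V(\sigma\tau^-,g)$, so every member of $M_{l+1}$ is a child of a member of $M_l$; second, each $\rho\in M_l$ has the full $b_l$ children in $T$, and by the same two-boundedness count that drives Lemma~\ref{lem:rb-cntng} (pick $y$ large inside the infinite set $V(\sigma\rho,g)\cap X$; a deficient child $\rho\{x\}$ forces $g(x,y)=g(u,y)$ for some $u\in\sigma\rho$, and each $u$ can pair with at most one $x$), at most $|\sigma|+l$ of those $b_l$ children fall outside $M_{l+1}$. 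Hence $|M_{l+1}|\ge(b_l-|\sigma|-l)|M_l|$, and with $(|\sigma|+l)/b_l\le 2^{-(l+3)}$ this gives $|M_l|/\bar{b}_l\ge\prod_{i<l}\bigl(1-2^{-(i+3)}\bigr)>1-\sum_{i<l}2^{-(i+3)}>\tfrac34$. The decisive difference from your bookkeeping is that once a node leaves $M$, neither it nor its subtree is ever counted again on the right side of the recursion; the per-level loss is genuinely a $2^{-(l+3)}$ fraction of a level rather than something near $\tfrac14$, which is what your $\alpha_l$-calibration on $|T\cap[\omega]^l|$ cannot reproduce.
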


\begin{proof}
For $l > 0$, let
$$
    M_l = \{\tau \in T(\sigma, X, g) \cap [\omega]^l: V(\sigma \tau, g) \text{ is infinite}\}.
$$
By the above lemma,
$$
    l > 0 \to |M_l| > \frac{3}{4} \bar{b}_l.
$$
This proves the lemma.
\end{proof}

By the above lemma and the definition of $T(\sigma,X,g)$, if $g \in \mathcal{A}_{\sigma,X}$ then we can $X \oplus g$-computably map $[T(\sigma,X,g)]$ onto some $[S] \subseteq 2^\omega$ with measure at least three quarters, where $S$ is a binary tree computably enumerable in $X \oplus g$. The mapping goes as following: if $\tau \in T(\sigma,X,g)$ is mapped to $\nu \in S$, then $\tau \{x\} \in T(\sigma,X,g)$ is mapped to $\nu \xi \in S$, so that $x$ is the $(\sum_{\xi(i) = 1} 2^{i})$-th element of $V(\sigma \tau, g) \cap X$. As $V(\sigma \tau,g) \cap X$ could be empty, the resulting $S$ can only be $\Sigma^{X \oplus g}_1$, although $T(\sigma,X,g)$ is $X \oplus g$-computable.

We are ready to extend an admissible condition to another that forces an incomputability requirement. To this end, we use splitting computations as usual.

\begin{definition}\label{dfn:e-split}
A finite sequence $\rho$ \emph{$(e, Z)$-splits over $\sigma$}, if and only if $\rho = \sigma \tau$ for some $\tau$ and there are different $\tau_0$ and $\tau_1$ such that both $\tau_0$ and $\tau_1$ are subsets of $\tau$ and $\Phi_e(Z \oplus (\sigma\tau_0); x) \downarrow \neq \Phi_e(Z \oplus (\sigma\tau_1); x) \downarrow$ for some $x$.

If $g \in \mathcal{A}_{\sigma, X}$ is such that
$$
    |\{\tau \in T(\sigma, X, g) \cap [\omega]^l: \sigma \tau \text{ $(e, Z)$-splits over } \sigma\}| < \frac{\bar{b}_l}{2}
$$
for all $l$, then \emph{$g$-rainbows $(e, Z)$-split over $(\sigma, X)$ with low probability}.
\end{definition}

It is a $\Pi^{Z \oplus X \oplus g}_1$ question whether $g$-rainbows $(e, Z)$-split over $(\sigma, X)$ with low probability. This complexity bound is due to $T(\sigma,X,g)$ being nicely bounded. This explains why we define $T(\sigma,X,g)$ in a way different from $T_f$ in \cite{Csima.Mileti:2009.rainbow}.

\begin{lemma}\label{lem:cone-avoidance}
Suppose that $(\sigma, X)$ is an admissible Mathias condition. For each $e$, there exists an admissible $(\tau, Y) \leq_M (\sigma, X)$ such that $(\tau, Y) \Vdash \Phi_e(\dot{Z} \oplus \dot{G}) \neq \dot{W}$.
\end{lemma}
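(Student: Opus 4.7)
The plan is to follow the outline preceding the lemma and split on whether $\mathcal{B} = \{g \in \mathcal{A}_{\sigma, X} : g\text{-rainbows }(e, Z)\text{-split over }(\sigma, X) \text{ with low probability}\}$ is empty; this is a $\Pi^{Z \oplus X}_1$ subclass of the $\Pi^X_1$ class $\mathcal{A}_{\sigma, X}$. In the nonempty case I shall shrink the reservoir to force $\Phi_e(Z \oplus G) \neq W$ via the no-splitting trick; in the empty case I shall extend $\sigma$ finitely and force a $\Sigma^Z_1$-statement via a direct splitting witness. Admissibility will be maintained in both cases by combining Theorem \ref{thm:Jockusch-Soare} with Corollary \ref{cor:Seetapun}.

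Suppose $\mathcal{B} \neq \emptyset$. I apply Theorem \ref{thm:Jockusch-Soare} with oracle $Z \oplus X$ to pick $g \in \mathcal{B}$ with $W \not\leq_T Z \oplus X \oplus g$, and work inside the subtree
\[
T^* = \{\tau \in T(\sigma, X, g) : \sigma\tau \text{ does not } (e, Z)\text{-split over } \sigma\},
\]
which is $\Pi^{Z \oplus X \oplus g}_1$, computably bounded by $T(\sigma, X, g)$, and satisfies $|T^* \cap [\omega]^l| > \bar{b}_l/4$ for each $l$ by Lemma \ref{lem:bushy} combined with low-splitting of $g$. After representing $[T^*]$ as a $\Pi^0_1$ class over a $(Z \oplus X \oplus g)$-computable subtree in the standard way, a second application of Theorem \ref{thm:Jockusch-Soare} yields $P \in [T^*]$ with $W \not\leq_T Z \oplus X \oplus g \oplus P$. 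The condition $(\sigma, Y)$ with $Y = \mathrm{ran}(P) \subseteq X$ is admissible, and no pair of finite subsets of $Y$ extending $\sigma$ $(e, Z)$-splits, so the partial function $\theta(x) = \Phi_e(Z \oplus \sigma\tau; x)$, defined using any finite $\tau \subseteq Y$ witnessing convergence, is well-defined and $\leq_T Z \oplus Y$. Hence $\theta \neq W$, and a standard case analysis on whether $\theta$ is partial or differs totally from $W$ at a witness gives $(\sigma, Y) \Vdash \Phi_e(Z \oplus G) \neq W$.

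Suppose instead $\mathcal{B} = \emptyset$, so $f \notin \mathcal{B}$. Fix $l$ for which $T(\sigma, X, f) \cap [\omega]^l$ contains at least $\bar{b}_l/2$ splitting $\tau$, and set $S = M_l \cap \{\tau : \sigma\tau \text{ splits}\}$; Lemma \ref{lem:bushy} gives $|S| > \bar{b}_l/4$. For each $\tau \in S$ fix a splitting witness $(\tau_0, \tau_1, x_\tau)$. After first replacing $X$ by its cofinite tail beyond $\max\{\max\tau : \tau \in T(\sigma, X, f) \cap [\omega]^l\}$, I enumerate $S$ and iteratively apply Corollary \ref{cor:Seetapun}: at each step, partition the current reservoir via membership in $V(\sigma\tau, f)$ and keep an infinite cone-avoiding subset of one of the two pieces. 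The final $Y$ has determinate membership in each $V(\sigma\tau, f)$ for $\tau \in S$. By Lemma \ref{lem:rb-cntng}, any sufficiently large $w \in Y$ lies outside $V(\sigma\tau, f)$ for fewer than $(1/4 - 1/2^{l+2})\bar{b}_l$ many $\tau \in T(\sigma, X, f) \cap [\omega]^l$; since this is strictly less than $|S|$, some $\tau \in S$ must satisfy $Y \subseteq V(\sigma\tau, f)$. Pick such a $\tau$ and $j \in \{0, 1\}$ with $\Phi_e(Z \oplus \sigma\tau_j; x_\tau) \neq W(x_\tau)$; using $V(\sigma\tau, f) \subseteq V(\sigma\tau_j, f)$, the condition $(\sigma\tau_j, Y)$ is admissible and forces disagreement at $x_\tau$ via the finite witness.

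The chief obstacle is the counting argument at the end of Case B: after iterating Corollary \ref{cor:Seetapun} across all of $S$, one must verify that the final reservoir lies inside $V(\sigma\tau, f)$ for at least one $\tau \in S$, which hinges on the strict inequality between $|S| > \bar{b}_l/4$ and the bound $< (1/4 - 1/2^{l+2})\bar{b}_l$ of Lemma \ref{lem:rb-cntng}. A secondary technicality in Case A is passing from the $\Pi^{Z \oplus X \oplus g}_1$ class $[T^*]$ to a presentation as a $\Pi^0_1$ class over a $(Z \oplus X \oplus g)$-computable tree so that Theorem \ref{thm:Jockusch-Soare} applies verbatim.
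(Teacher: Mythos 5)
Your overall outline matches the paper's proof: split on whether the class $\mathcal{U}$ of acceptable colorings with low-probability splitting is empty, and in the nonempty case thin the reservoir to a path through the non-splitting subtree, while in the empty case extend the stem finitely through a splitting witness and invoke Corollary \ref{cor:Seetapun} to keep the reservoir admissible.

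However, there is a genuine gap in your Case A (the $\mathcal{U}\neq\emptyset$ case), which you flag as a ``secondary technicality'' but which is in fact the crux. You propose a second application of Theorem \ref{thm:Jockusch-Soare} to $[T^*]$. But Theorem \ref{thm:Jockusch-Soare} requires a $(Z\oplus X\oplus g)$-computably bounded tree. $T^*$ is $(Z\oplus X\oplus g)$-computable, but it is not clear that it is computably bounded: a child $\tau\{x\}$ of $\tau$ is one of the least $b_{|\tau|}$ elements of $V(\sigma\tau,g)\cap X$, and if that set has fewer than $b_{|\tau|}$ elements one cannot decide when to stop searching, so a computable bound on $T^*\cap[\omega]^n$ is not available (it is only $(Z\oplus X\oplus g)'$-computably bounded). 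Your phrase ``After representing $[T^*]$ as a $\Pi^0_1$ class over a $(Z\oplus X\oplus g)$-computable tree in the standard way'' is exactly where the argument breaks: there is no general such representation for a computable but not computably bounded tree in Baire space, and indeed cone avoidance fails for arbitrary $\Pi^0_1$ classes in $\omega^\omega$. The paper circumvents this by a measure-theoretic argument: the map from $T(\sigma,X,g)$ into $2^{<\omega}$ described after Lemma \ref{lem:bushy} is only partial, so its image $S$ is merely $\Sigma^{X\oplus g}_1$, and $[T^*]$ projects to a positive-measure class that is only $\Pi^{X\oplus g}_2$. The paper then appeals to Ku\v{c}era's lemma applied to an $R$ that is \emph{$2$-random} in $Z\oplus g\oplus X$, which is precisely the extra layer of quantifier needed to handle the $\Sigma_1$ enumeration of $S$. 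Your argument would need this Ku\v{c}era/randomness step, or some other device, to be correct.

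Your Case B is essentially correct and reaches the same conclusion as the paper, but is more roundabout. The paper applies Corollary \ref{cor:Seetapun} once to the single finite partition $p$ of the tail of $X$ given by the least $i$ with $y\in V(\sigma\tau_i,f)$, and the counting from Lemma \ref{lem:rb-cntng} shows $p$ is total; this immediately yields one cone-avoiding $Y\subseteq V(\sigma\tau_i,f)$. Your iterated application of Corollary \ref{cor:Seetapun} across all of $S$ (bundling the previously chosen reservoirs into the oracle at each stage to preserve admissibility) also works, and your final counting that strictly fewer than $\bar{b}_l/4$ of the level-$l$ nodes can exclude any given large $y$, against $|S|>\bar{b}_l/4$, is the right comparison; but the single-application version is cleaner and you should also note that the paper extracts the splitting witness from $\sigma\tau_i$ itself rather than needing to enumerate splitting witnesses in advance.
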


\begin{proof}
Let $\mathcal{U}$ be the set of $g \in \mathcal{A}_{\sigma, X}$ such that $g$-rainbows $(e, Z)$-split over $(\sigma, X)$ with low probability. Clearly, $\mathcal{U}$ is $\Pi^{Z \oplus X}_1$. There are two cases.

\emph{Case 1:} $\mathcal{U} \neq \emptyset$. By Theorem \ref{thm:Jockusch-Soare} of Jockusch and Soare, there exists $g \in \mathcal{U}$ such that $W \not\leq_T Z \oplus g \oplus X$. Let
$$
    T = T(\sigma, X, g) \cap \{\tau \in [\omega]^{<\omega}: \sigma \tau \text{ does not $(e, Z)$-split over } \sigma\}.
$$
As $T(\sigma, X, g)$ is computable in $g \oplus X$, $T$ is computable in $Z \oplus X \oplus g$. By Lemma \ref{lem:bushy} and that $g$-rainbows $(e, Z)$-split over $(\sigma, X)$ with low probability,
$$
    \forall l (|T \cap [\omega]^l| \geq \frac{\bar{b}_l}{4}).
$$
Combining the above inequality and the remark following Lemma \ref{lem:bushy}, and by an application of the corollary of Lemma 3 in Ku\v{c}era \cite{Kucera:85}, if $R$ is 2-random in $Z \oplus g \oplus X$ then $Z \oplus g \oplus X \oplus R$ computes some $Y \in [T]$. As $W \not\leq_T Z \oplus g \oplus X$, there exist $Y$ and $R$ such that $R$ is 2-random in $Z \oplus g \oplus X$, $Y \leq_T Z \oplus g \oplus X \oplus R$, $Y \in [T]$ and $W \not\leq_T Z \oplus Y$. Note that each path of $T$ is a subset of $X$. So, $(\sigma, Y)$ is an admissible extension of $(\sigma, X)$.

It remains to show that $(\sigma, Y) \Vdash \Phi_e(\dot{Z} \oplus \dot{G}) \neq \dot{W}$. Suppose that for each $x$ there exists $\tau \in [Y]^{<\omega}$ such that $\Phi_e(Z \oplus (\sigma \tau); x) \downarrow$. Then there must be some $x$ such that $\Phi_e(Z \oplus (\sigma \tau); x) \neq W(x)$ whenever $\Phi_e(Z \oplus (\sigma \tau); x) \downarrow$ for $\tau \in [Y]^{<\omega}$, as $W \not\leq_T Z \oplus Y$ and $\sigma \cup Y$ does not $(e, Z)$-split over $\sigma$.

\emph{Case 2:} $\mathcal{U} = \emptyset$. In particular $f \not\in \mathcal{U}$. By the definition of $\mathcal{U}$, there exist some $l > 0$ and
$$
    \{\tau_i: i < b = \frac{\bar{b}_l}{2}\} \subseteq \{\tau \in T(\sigma, X, f) \cap [\omega]^l: \sigma \tau \text{ $(e, Z)$-splits over } \sigma\}.
$$

Let $m = \max \bigcup_{i < b} \tau_i$. By Lemma \ref{lem:rb-cntng}, for each $y \in X - [0, m]$ there exists $i < b$ such that $y \in V(\sigma \tau_i, f)$. Let $p(y)$ be the least such $i$. So, $p$ is a finite partition of $X - [0, m]$. By Seetapun's cone avoidance for infinite pigeonhole principle (Corollary \ref{cor:Seetapun}), there exist $i < b$ and $Y \in [X \cap p^{-1}(i)]^\omega$ such that $W \not\leq_T Z \oplus X \oplus Y$. It follows that $f \in \mathcal{A}_{\sigma \tau_i, Y}$. As $\sigma\tau_i$ $(e, Z)$-splits over $\sigma$, there are $x$, $\pi_0$ and $\pi_1$ such that $\sigma \subseteq \pi_j \subseteq \sigma \tau_i$ for $j < 2$ and $\Phi_e(Z \oplus \pi_0; x) \downarrow \neq \Phi_e(Z \oplus \pi_1; x) \downarrow$. Take $\tau = \pi_j$ such that $\Phi_e(Z \oplus \pi_j; x) \neq W(x)$.

So, $(\tau, Y) \leq_M (\sigma, X)$ is admissible and $(\tau, Y) \Vdash \Phi_e(\dot{Z} \oplus \dot{G}) \neq \dot{W}$.
\end{proof}

An argument similar to the second case in the above proof allows us to extend the finite heads of admissible Mathias conditions.

\begin{lemma}\label{lem:finite-ext}
If $(\sigma, X)$ is an admissible Mathias condition then there exists an admissible $(\tau, Y) \leq_M (\sigma, X)$ such that $\sigma \subset \tau$.
\end{lemma}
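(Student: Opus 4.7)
The plan is to mimic Case~2 of the proof of Lemma~\ref{lem:cone-avoidance}, simply omitting the splitting requirement so that \emph{any} sufficiently viable extension will do. Since $(\sigma, X)$ is admissible, $f \in \mathcal{A}_{\sigma, X}$, so the tree $T = T(\sigma, X, f)$ is defined. By Lemma~\ref{lem:bushy}, $T \cap [\omega]^1$ is nonempty; enumerate its elements as $\{\tau_i : i < b\}$ and set $m = \max \bigcup_{i < b} \tau_i$.

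Next, Lemma~\ref{lem:rb-cntng} tells us that for every $y \in X$ with $y > m$, strictly fewer than $\bar{b}_1/4$ of the $\tau_i$ fail to satisfy $y \in V(\sigma\tau_i, f)$. Hence
$$
    p(y) = \min\{i < b : y \in V(\sigma\tau_i, f)\}
$$
defines a total finite partition $p : X \cap (m, \infty) \to b$. Applying Corollary~\ref{cor:Seetapun} with $Z \oplus X$ in place of $Z$ (which is legal since $W \not\leq_T Z \oplus X$ by admissibility) yields some $i < b$ and an infinite $Y \subseteq p^{-1}(i)$ with $W \not\leq_T Z \oplus X \oplus Y$, hence in particular $W \not\leq_T Z \oplus Y$.

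Setting $\tau = \sigma\tau_i$, we have $\max \tau \leq m < \min Y$, $Y \subseteq X$, and $Y \subseteq V(\tau, f)$, so $f \in \mathcal{A}_{\tau, Y}$; combined with $W \not\leq_T Z \oplus Y$ this makes $(\tau, Y) \leq_M (\sigma, X)$ admissible. Since each $\tau_i$ is a nonempty singleton, $\sigma \subsetneq \tau$, giving the desired strictly longer head.

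No serious obstacle is anticipated. In fact, at level $l = 1$ one has $\bar{b}_1 = b_0 = 1$, so $b = 1$ and the invocation of Seetapun's cone avoidance collapses to the trivial observation that the unique $\tau_0$ satisfies $X \cap (m, \infty) \subseteq V(\sigma\tau_0, f)$ outright, after which one may simply take $Y = X \cap (m, \infty)$. I prefer to present the argument in the Seetapun-style form above because it is verbatim parallel to Case~2 of Lemma~\ref{lem:cone-avoidance} and emphasizes that the only machinery required is the tree $T(\sigma, X, f)$ together with the counting estimate of Lemma~\ref{lem:rb-cntng}.
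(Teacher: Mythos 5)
There is a genuine gap. Your argument hinges on the tree $T(\sigma, X, f)$ at level $1$, but $b_0 = 1$, so $T(\sigma, X, f) \cap [\omega]^1$ consists of the single node $\tau_0 = \{\min X\}$. Your conclusion, via Lemma~\ref{lem:rb-cntng} at $l = 1$, is that $y \in V(\sigma\tau_0, f)$ for \emph{every} $y \in X$ with $y > \min X$, i.e.\ that $\sigma\{\min X, y\}$ is an $f$-rainbow for all such $y$. This is false in general when $|\sigma| \geq 1$: take $\sigma = \{a\}$, $x_0 = \min X$, and suppose $f(a, y) = f(x_0, y)$ for some $y \in X$ with $y > x_0$. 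This is perfectly compatible with $f \in \mathcal{A}_{\sigma, X}$ and with $f$ being normal and $2$-bounded (each of $\sigma\{x_0\}$, $\sigma\{y\}$, $\{x_0, y\}$ is a pair, hence trivially a rainbow), yet $\sigma\{x_0, y\} = \{a, x_0, y\}$ is not a rainbow, so $y \notin V(\sigma\{x_0\}, f)$ and your partition $p$ is undefined at $y$. The source of the trouble is that the inequality in Lemma~\ref{lem:rb-cntng} gives no real information at $l = 1$: the induction step from $l = 0$ to $l = 1$ in its proof requires $|\sigma| + 0 \leq b_0/2^3$, i.e.\ $b_0 \geq 8|\sigma|$, and the constant $b_0 = 1$ is too small for this when $\sigma \neq \emptyset$. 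So you are invoking the one case of the counting lemma whose proof does not go through, and the stated bound $\bar{b}_1/4 = 1/4$ is not actually achieved.

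The paper avoids the tree entirely here. It takes the first $l + 1$ elements $x_0, \ldots, x_l$ of $X$, where $l = |\sigma|$, and argues totality of $p$ directly from $2$-boundedness: if $y \notin V(\sigma\{x_i\}, f)$ for every $i \leq l$, then (using that $\sigma\{x_i\}$ and $\sigma\{y\}$ are rainbows and $\omega$ is a tail rainbow) for each $i$ there is a \emph{distinct} $u_i \in \sigma$ with $f(u_i, y) = f(x_i, y)$, giving $l + 1$ distinct elements in the $l$-element set $\sigma$ — a contradiction. The tree $T(\sigma, X, f)$ is deliberately very narrow (this is what makes it compact and keeps the relevant question $\Pi^0_1$ for Lemma~\ref{lem:cone-avoidance}), and in particular it only keeps \emph{one} candidate at level $1$; for the purely combinatorial finite-extension step you need more candidates than that. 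Replacing the one tree node by $|\sigma| + 1$ elements of $X$ and counting as the paper does is the missing idea.
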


\begin{proof}
Let $l = |\sigma|$ and $x_0, \ldots, x_l$ be the first $l + 1$ elements of $X$ in ascending order. For each $y \in X \cap (x_l,\infty)$, let $p(y)$ be the least $i \leq l$ such that $y \in V(\sigma \{x_i\}, f)$. By the $2$-boundedness of $f$, $p$ is total on $X \cap (x_l,\infty)$. By Seetapun's cone avoidance for infinite pigeonhole principle again, there exist $i \leq l$ and $Y \in [X \cap p^{-1}(i)]^\omega$ such that $W \not\leq_T Z \oplus X \oplus Y$. Clearly, $(\sigma \{x_i\}, Y)$ is as desired.
\end{proof}

By Lemmata \ref{lem:cone-avoidance} and \ref{lem:finite-ext}, we can obtain a descending sequence of Mathias conditions $((\sigma_n, X_n): n \in \omega)$ such that $(\sigma_n,X_n) \Vdash \Phi_n(\dot{Z} \oplus \dot{G}) \neq \dot{W}$ and $\sigma_n \subset \sigma_{n+1}$ for all $n$. So, $G = \bigcup_n \sigma_n$ is a desired $f$-rainbow. This proves Lemma \ref{lem:rainbow-for-pairs}.

Theorem \ref{thm:rainbow-for-pairs} follows immediately.

\section{Rainbows for Colorings of Triples}\label{sect:triples}

In this section, we prove the main theorem.

\begin{theorem}\label{thm:RRT3-ACA}
$\RRT^3_2 \not\vdash \ACA$. Thus $\RRT^3_2$ is strictly weaker than $\RT^3_2$.
\end{theorem}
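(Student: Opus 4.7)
The plan is to construct an $\omega$-model $\mathcal{M}$ of $\RCA + \RRT^3_2$ that omits the halting problem $\emptyset'$. Since every $\omega$-model of $\ACA$ contains the Turing jumps of all of its members, this forces $\mathcal{M} \not\models \ACA$ and hence $\RCA + \RRT^3_2 \not\vdash \ACA$; the second clause of the theorem then follows from Jockusch's $\RT^3_2 \vdash \ACA$. The model is built by routine stagewise iteration: starting from the computable sets, at each stage I enumerate the $2$-bounded colorings $f: [\omega]^3 \to \omega$ that are computable from some base $Z$ currently indexed in the model, adjoin an infinite $f$-rainbow $H$ while preserving $\emptyset' \not\leq_T H \oplus Z$, and close under Turing reducibility. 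A standard diagonal bookkeeping over all (coloring, base set, requirement) triples produces the model.

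The analytic engine of the construction is a cone-avoidance lemma (the lemma referenced in the introduction as Lemma \ref{lem:RRT3-cone-avoidance}): if $W \not\leq_T Z$ and $f: [\omega]^3 \to \omega$ is $Z$-computable and $2$-bounded, then $f$ admits an infinite rainbow $H$ with $W \not\leq_T H \oplus Z$. Following the reduction announced in the introduction, I attach to $f$ a uniformly $Z$-computable sequence $\vec{R}$ of subsets of $\omega$ that records, for each pair $(a,b)$ and each relevant piece of color information, appropriate tail behaviour of the section $f(a,b,\cdot)$. Theorem \ref{thm:SS-CJS}(2) then yields an infinite $\vec{R}$-cohesive set $C$ with $W \not\leq_T C \oplus Z$. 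The design of $\vec{R}$ is such that on $C$ the third coordinate of $f$ is canonized enough to define a $2$-bounded binary coloring $g: [C]^2 \to \omega$, of complexity at worst $f \oplus C$, with the key property that every infinite $g$-rainbow inside $C$ is automatically an $f$-rainbow.

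Now I apply the strong cone-avoidance Theorem \ref{thm:rainbow-for-pairs} to $g$ with base set $Z \oplus C$. Since $W \not\leq_T Z \oplus C$, and since the theorem imposes no computability hypothesis whatsoever on the coloring being rainbow-ed, it produces an infinite $g$-rainbow $H$ with $W \not\leq_T H \oplus Z \oplus C$. By construction $H$ is also an $f$-rainbow, and $W \not\leq_T H \oplus Z$, closing the lemma. Feeding the lemma into the iterative model construction closes the theorem.

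The main obstacle is the combinatorial design of $\vec{R}$ and the induced coloring $g$. The sequence must be uniformly $Z$-computable so that $\COH$-strength cone avoidance suffices and no $\ACA$-strength closure sneaks in; at the same time, $\vec{R}$-cohesiveness on $C$ must pin down the third-coordinate behaviour of $f$ strongly enough that the only way a $g$-rainbow inside $C$ could fail to be an $f$-rainbow is via a $g$-collision on pairs. The $2$-boundedness of $f$ is essential here: each color class in $[\omega]^3$ has at most two representatives, so the collisions that would break the rainbow condition on triples are extremely constrained and can be tracked through a modest $Z$-computable sequence of predicates about the sections $f(a,b,\cdot)$. Once this reduction is engineered, the measure-theoretic and Mathias-forcing machinery of \S\ref{sect:pairs} does all the remaining work on the pairs side.
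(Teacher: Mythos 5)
Your architecture matches the paper's: build an $\omega$-model of $\RCA + \RRT^3_2$ omitting a fixed non-computable (e.g., arithmetical) set by iterating a cone-avoidance lemma for $2$-bounded colorings of triples, and reduce that lemma, via the cone avoidance of $\COH$ (Theorem~\ref{thm:SS-CJS}(2)) applied to a uniformly $Z$-computable array $\vec{R}$ recording tail behaviour of $f(x,y,\cdot)$, to the strong cone-avoidance theorem for pairs (Theorem~\ref{thm:rainbow-for-pairs}). This is exactly the paper's route, and you are right that the fact Theorem~\ref{thm:rainbow-for-pairs} imposes no effectivity hypothesis on the coloring is what lets the induced pair-coloring (which is only $f \oplus C$-computable, hence possibly above $Z$) be handled.

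There is, however, a genuine gap at the step where you close the lemma. You assert that the induced $2$-bounded $g : [C]^2 \to \omega$ can be designed so that \emph{every infinite $g$-rainbow inside $C$ is automatically an $f$-rainbow}. This is not what the natural $\COH$-based construction delivers, and it is essentially the wrong invariant. Cohesiveness only gives \emph{eventual} canonization: writing $\bar f(x,y,s) = \min\{\langle u,v\rangle : f(u,v,s)=f(x,y,s)\}$ and $\hat f(x,y) = \lim_{s\in C}\bar f(x,y,s)$, a pair of distinct $(x,y),(x',y')$ with $\hat f(x,y)\neq \hat f(x',y')$ guarantees $f(x,y,s)\neq f(x',y',s)$ only for \emph{sufficiently large} $s\in C$, with a threshold depending on $(x,y),(x',y')$. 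So an $\hat f$-rainbow $G \subseteq C$ can still have small witnesses $z \in G$ at which $f(x,y,z)=f(x',y',z)$, and $G$ need not be an $f$-rainbow. The paper's proof of Lemma~\ref{lem:RRT3-cone-avoidance} contains an additional step that you omit: it constructs the desired $X$ as a further thinning of $G$ by induction, using Claim~\ref{clm} (each finite $f$-rainbow $X_n \subseteq G$ extends by any sufficiently large $a \in G$, precisely because by cohesiveness the limit behaviour of $\bar f(x,y,\cdot)$ has stabilized for all $(x,y)\in[X_n]^2$ once $a$ is large enough). Without this extra pass your proof of the cone-avoidance lemma does not go through as stated; you either need to add this thinning step or exhibit a genuinely different design of $g$ with the stronger property you claim, which is not possible with only $\COH$-level information about $f$.

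One smaller imprecision: you should also invoke Csima--Mileti's Proposition~3.3 (or the measure-theoretic Lemma~\ref{lem:tail-rainbow-random}) at the start of the triples lemma to reduce to the case where $\omega$ is a $1$-tail $f$-rainbow, otherwise the $\COH$ reduction does not fully canonize the third coordinate.
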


To this end, for each non-computable $W$ we build an $\omega$-model $\mathcal{M} = (\omega, \mathcal{S})$ such that $\mathcal{M} \models \RCA + \RRT^3_2$ and $W \not\in \mathcal{S}$. In particular, taking $W$ to be a non-computable arithmetic set, yields that $\mathcal{M} \not\models \ACA$.

The key is to find an $f$-rainbow $X$ with $W \not\leq_T Z \oplus X$, whenever $W \not\leq_T Z$ and $f: [\omega]^3 \to \omega$ is 2-bounded and $Z$-computable. If we achieve this, then we can inductively build a desired $\mathcal{S}$.

We state the above key step as a lemma.

\begin{lemma}\label{lem:RRT3-cone-avoidance}
If $W \not\leq_T Z$ and $Z$ computes a $2$-bounded $f: [\omega]^3 \to \omega$, then there exists an infinite $f$-rainbow $X$ such that $W \not\leq_T Z \oplus X$.
\end{lemma}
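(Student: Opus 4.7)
The plan is to reduce Lemma \ref{lem:RRT3-cone-avoidance} to the strong cone-avoidance for $2$-bounded colorings of pairs (Theorem \ref{thm:rainbow-for-pairs}) via an application of the cone-avoidance for $\COH$ (Theorem \ref{thm:SS-CJS}(2)). This mirrors the familiar decomposition $\RT^3_2 \equiv \COH + \RT^2_2$: first pass to a cohesive set that stabilizes the third-coordinate behavior of $f$, then apply a pair-coloring theorem to a derived $2$-bounded coloring. Crucially, since $f \leq_T Z$, any sequence of subsets of $\omega$ defined uniformly from $f$ is $Z$-computable, so cone-avoiding $\COH$ applies directly.

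The first step is to define a $Z$-computable sequence $\vec{R}$ whose members record potential collision patterns for $f$. For each finite tuple $((u_1, v_1), \ldots, (u_k, v_k))$ of pairs from $[\omega]^2$ together with an equivalence relation on $\{1, \ldots, k\}$, include in $\vec{R}$ the set of $z > \max_i v_i$ realizing that equivalence among the values $f(u_i, v_i, z)$; include also sets tracking cross-$z$ collisions $f(u, v, z) = f(u', v', z')$ with $z \neq z'$. By Theorem \ref{thm:SS-CJS}(2), there is an infinite $\vec{R}$-cohesive set $Y$ with $W \not\leq_T Z \oplus Y$. Cohesiveness stabilizes, for each pair of pairs, whether they collide at almost all $z \in Y$ or almost none, and similarly for the cross-$z$ patterns.

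Next I would define an auxiliary $2$-bounded pair coloring $g : [\omega]^2 \to \omega$ from $f$ and $Y$, for instance $g(u, v) = f(u, v, z_{u,v})$ where $z_{u,v} \in Y$ is canonically chosen to lie on the stable side of every member of $\vec{R}$ involving $(u, v)$. Two-boundedness of $g$ is immediate from $2$-boundedness of $f$: three pairs sharing a $g$-value would yield three triples sharing an $f$-color. The cohesiveness of $Y$ is arranged so that, after possibly discarding a finite initial segment, any $g$-rainbow $X \subseteq Y$ lifts to an $f$-rainbow, because any residual collision among triples of $[X]^3$ would have to follow one of the stabilized collision patterns, and such patterns are ruled out by the $g$-rainbow condition. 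Finally, apply Theorem \ref{thm:rainbow-for-pairs} to $g$ with oracle $Z \oplus Y$; since no complexity hypothesis on $g$ is required, this produces an infinite $g$-rainbow $X$ with $W \not\leq_T Z \oplus Y \oplus X \geq_T Z \oplus X$, giving the desired $f$-rainbow.

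The main obstacle will be the precise design of $\vec{R}$ and $g$: cohesiveness of $Y$ with respect to $\vec{R}$ must encode \emph{enough} of the triple-collision structure of $f$ so that $g$-rainbows lift to $f$-rainbows, while $g$ must remain $2$-bounded. Both same-$z$ collisions $f(u, v, z) = f(u', v', z)$ and cross-$z$ collisions $f(u, v, z) = f(u', v', z')$ with $z \neq z'$ must be handled, and one must verify that each potential collision either is stabilized away by $Y$ or is witnessed by a $g$-color equality. The $2$-boundedness of $f$ is used essentially here: each color class has at most two triples, keeping the collision structure sparse enough to be captured by a $2$-bounded pair coloring.
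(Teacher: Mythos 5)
Your overall decomposition (cone-avoiding $\COH$ on a sequence derived from $f$, then Theorem \ref{thm:rainbow-for-pairs}) is exactly the paper's plan, and you are right that the full arbitrary-complexity strength of Theorem \ref{thm:rainbow-for-pairs} is essential, since the derived pair coloring need not be computable from $Z$ and the cohesive set. However, the details contain genuine gaps. The coloring $g(u,v) = f(u,v,z_{u,v})$, with $z_{u,v}$ chosen per pair, does not detect the stabilized same-$z$ collision structure: if $f(u,v,z) = f(u',v',z)$ for almost all $z$ in the cohesive set but $z_{u,v} \neq z_{u',v'}$, then $g(u,v) = f(u',v',z_{u,v})$ while $g(u',v') = f(u',v',z_{u',v'})$, and these are different colors (once, as one should, $\omega$ has been made a $1$-tail $f$-rainbow, so $f(u',v',z_{u,v}) \neq f(u',v',z_{u',v'})$). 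Thus a $g$-rainbow can contain all of $u,v,u',v'$, and adding one more large element of $Y$ destroys the $f$-rainbow property. The paper's $\hat f$ instead colors each pair by its stable minimal \emph{collision partner} $\lim_{s\in C}\bar f(x,y,s)$, which is exactly designed so that pairs which eventually collide at the same slice receive the same color.

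Two further points. Your attempt to absorb cross-$z$ collisions into $\vec R$ does not go through: each member of $\vec R$ is a subset of $\omega$, a property of a single slice $z$, whereas $f(u,v,z) = f(u',v',z')$ with $z \ne z'$ is a relation on pairs of slices; cohesiveness stabilizes tail behavior in one variable and gives no help in avoiding a sparse binary relation. The paper avoids this issue entirely by first passing to a cone-avoiding $1$-tail $f$-rainbow (a Csima--Mileti style measure argument, as in Lemma \ref{lem:tail-rainbow-random}), so that cross-$z$ collisions simply do not occur. Finally, \emph{discarding a finite initial segment} is not sufficient for the lift from an $\hat f$-rainbow $G$ to an $f$-rainbow $X$: the slice beyond which $\bar f(x,y,\cdot)$ stabilizes on $C$ depends on the pair $(x,y)$, so $G$ must be thinned inductively, adding each new element only after it clears the stabilization points of all previously chosen pairs — this is precisely the role of the Claim inside the paper's proof.
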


\begin{proof}
Let $W, Z$ and $f$ be as above. By \cite[Proposition 3.3]{Csima.Mileti:2009.rainbow}, we may assume that $\omega$ is a \emph{$1$-tail $f$-rainbow}, i.e., $f(u,v,w) \neq f(x,y,z)$ whenever $w \neq z$. Let
$$
    I = \{(u,v,x,y) \in [\omega]^2 \times [\omega]^2: \<u,v\> < \<x,y\>\}.
$$
Let $\bar{f}(x,y,z) = \min \{\<u,v\>: f(u,v,z) = f(x,y,z)\}$ for each $(x,y,z) \in [\omega]^3$, and let
$$
    R_{u,v,x,y} = \{s > y: \bar{f}(x,y,s) = \<u,v\>\}
$$
for $(u,v,x,y) \in I$. Note that $\bar{f}(x,y,z) \leq \<x,y\>$. By the cone avoidance of $\COH$, there exists $C$ such that $C$ is cohesive for $\vec{R} = (R_{u,v,x,y}: (u,v,x,y) \in I)$ and $W \not\leq_T Z \oplus C$.

For $(x,y) \in [C]^2$, let
$$
    \hat{f}(x,y) =
    \left\{
      \begin{array}{ll}
        \<u,v\>, & \<u,v\> = \lim_{s \in C} \bar{f}(x,y,s) \wedge (u,v) \in [C]^2;  \\
        \<x,y\>, & \hbox{otherwise.}
      \end{array}
    \right.
$$
By cohesiveness of $C$, $\hat{f}$ is well defined. Moreover, if $\hat{f}(x,y) = \<u,v\>$ and $(u,v) \in [C]^2$, then $\<u,v\> \leq \<x,y\>$ and $\bar{f}(x,y,s) = \<u,v\>$ for sufficiently large $s \in C$. Thus $f(x,y,s) = f(u,v,s)$ for sufficiently large $s \in C$. So, if $\hat{f}(x,y) = \hat{f}(x',y')$ then $f(x,y,s) = f(x',y',s)$ for sufficiently large $s \in C$. As $f$ is $2$-bounded, $\hat{f}$ is also $2$-bounded. By Theorem \ref{thm:rainbow-for-pairs}, there exists an infinite $\hat{f}$-rainbow $G$ such that $G \subseteq C$ and $W \not\leq_T Z \oplus G$.

We define a desired $X$ as a subset of $G$ by induction. Let $X_0 = \emptyset$. Suppose that $X_n \in [G]^{<\omega}$ is defined and $X_n$ is a rainbow for $f$.

\begin{claim}\label{clm}
For all sufficiently large $a \in G$, $X_n \cup \{a\}$ is a rainbow for $f$.
\end{claim}

\begin{proof}[Proof of Claim \ref{clm}]
As $X_n$ is finite and $G$ is $\vec{R}$-cohesive, there exists $s_0$ such that $\bar{f}(x,y,a) = \lim_{s \in C} \bar{f}(x,y,s)$ for all $a \in G \cap (s_0, \infty)$ and $(x,y) \in [X_n]^2$.

For $a \in G \cap (s_0, \infty)$, if there are $(x,y)$ and $(x',y')$ in $[X_n]^2$ such that $\<x,y\> < \<x',y'\>$ and $f(x,y,a) = f(x',y',a)$, then $\bar{f}(x',y',a) = \<x,y\>$ and $\hat{f}(x',y') = \<x,y\> = \hat{f}(x,y)$. But this is impossible, as $X_n \subset G$ is a rainbow for $\hat{f}$.

So, $f(x,y,a) \neq f(x',y',a)$ for distinct $(x,y)$ and $(x',y')$ in $[X_n]^2$ and all $a \in G \cap (s_0, \infty)$. As $X_n$ is an $f$-rainbow and $\omega$ is a $1$-tail $f$-rainbow, $X_n \cup \{a\}$ is an $f$-rainbow.
\end{proof}

By the above claim, let $a_n$ be the least $a \in G \cap (\max X_n, \infty)$ such that $X_n \cup \{a\}$ is a rainbow for $f$. Let $X_{n+1} = X_n \cup \{a_n\}$.

So, $X = \bigcup_n X_n$ is an infinite $f$-rainbow. As $X \leq_T Z \oplus G$, $W \not\leq_T Z \oplus X$.
\end{proof}

\begin{proof}[Proof of Theorem \ref{thm:RRT3-ACA}]
Fix a non-computable $W$. By inductive applications of Lemma \ref{lem:RRT3-cone-avoidance}, we can build a sequence $(\mathcal{M}_n: n < \omega)$ such that
\begin{enumerate}
    \item $\mathcal{M}_0$ is the least $\omega$-model of $\RCA$;
    \item Each $\mathcal{M}_{n + 1} = \mathcal{M}_n[X_n]$ for some $X_n$;
    \item For all $n > 0$, $W \not\leq_T \bigoplus_{i < n} X_i$;
    \item If $f: [\omega]^3 \to \omega$ is a $2$-bounded coloring in $\mathcal{M}_n$ then there exists some $m \geq n$ such that $X_m$ is an infinite $f$-rainbow;
\end{enumerate}
Then $\mathcal{M} = \bigcup_n \mathcal{M}_n$ is a model of $\RCA + \RRT^3_2$ and $W$ is not in $\mathcal{M}$.

In particular, taking $W$ to be the halting problem, yields that $\mathcal{M} \not\models \ACA$.
\end{proof}

Combining the above proof with the proof of Seetapun's Theorem \cite[Theorem 2.1]{Seetapun.Slaman:1995.Ramsey}, we get the following corollary.

\begin{corollary}
$\WKL + \RT^2_2 + \RRT^3_2 \not\vdash \ACA$.
\end{corollary}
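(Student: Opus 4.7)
The plan is to construct, for any fixed non-computable $W$ (say $W = \emptyset'$), an $\omega$-model $\mathcal{M} = (\omega, \mathcal{S})$ of $\RCA + \WKL + \RT^2_2 + \RRT^3_2$ with $W \notin \mathcal{S}$. This will directly give $\mathcal{M} \not\models \ACA$, proving the corollary. The construction parallels the proof of Theorem \ref{thm:RRT3-ACA}: iteratively add solutions to instances of the three principles while preserving the cone-avoidance property $W \not\leq_T Z$, where $Z$ is the running join of sets added so far.

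The three ingredients are cone-avoidance theorems, all already available: for $\WKL$, Theorem \ref{thm:Jockusch-Soare} (applied to binary trees, which are trivially computably bounded) shows that if $W \not\leq_T Z$ and $T \leq_T Z$ is an infinite binary tree, then some $X \in [T]$ satisfies $W \not\leq_T Z \oplus X$; for $\RT^2_2$, Seetapun's cone avoidance (Theorem \ref{thm:SS-CJS}(1)) gives, for any $Z$-computable $f: [\omega]^2 \to 2$, an infinite $f$-homogeneous $X$ with $W \not\leq_T Z \oplus X$; for $\RRT^3_2$, Lemma \ref{lem:RRT3-cone-avoidance} gives, for any $Z$-computable $2$-bounded $f: [\omega]^3 \to \omega$, an infinite $f$-rainbow $X$ with $W \not\leq_T Z \oplus X$.

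First I would enumerate, in a single list $(\mathcal{I}_n: n < \omega)$, all instances of all three principles that will ever appear in the model being built, using a standard bookkeeping scheme: at stage $n$ we consider one triple $(k, m, j)$ where $k$ names a set built at an earlier stage, $m$ is a Turing index, and $j \in \{0,1,2\}$ selects which of the three principles to address; this ensures every instance of every principle over every member of $\mathcal{S}$ is eventually handled. Then I would define $\mathcal{M}_0$ as the least $\omega$-model of $\RCA$ and $\mathcal{M}_{n+1} = \mathcal{M}_n[X_n]$, where $X_n$ is obtained by applying the appropriate one of the three cone-avoidance results above to the current instance $\mathcal{I}_n$, taking $Z = X_0 \oplus \cdots \oplus X_{n-1}$ (or a suitable set in $\mathcal{S}_n$) as the base oracle. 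By induction on $n$, the cone-avoidance clauses guarantee $W \not\leq_T X_0 \oplus \cdots \oplus X_n$. Taking $\mathcal{M} = \bigcup_n \mathcal{M}_n$, no finite join of $X_i$'s computes $W$, so $W \notin \mathcal{S}$; and by construction every $\WKL$, $\RT^2_2$, and $\RRT^3_2$ instance in $\mathcal{M}$ is solved inside $\mathcal{M}$.

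The main (and only) obstacle is in fact not technical but organizational: setting up the enumeration so that instances that arise only after a set has been added are still eventually addressed. This is handled by the standard trick of enumerating requirements over the names of sets already added by stage $n$, and only acting on $(k, m, j)$ at stages $n > k$; any instance involving $X_k$ is thus addressed at infinitely many later stages. No new combinatorial or computability-theoretic content is needed beyond the three cited cone-avoidance theorems, which is why the corollary follows immediately from the techniques already developed.
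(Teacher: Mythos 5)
Your proof is correct and matches the paper's intent exactly: the paper's one-line proof ("combining the above proof with the proof of Seetapun's Theorem") is precisely the interleaved iterated cone-avoidance construction you spell out, using Theorem~\ref{thm:Jockusch-Soare} for $\WKL$, Theorem~\ref{thm:SS-CJS}(1) for $\RT^2_2$, and Lemma~\ref{lem:RRT3-cone-avoidance} for $\RRT^3_2$. You have simply filled in the standard bookkeeping that the paper leaves implicit.
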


\section{Tail Rainbows for Colorings of Quadruples}\label{sect:quadruples}

In this section, we present some partial answer to the question whether $\RRT^n_2 \vdash \ACA$ for some $n > 3$. We obtain a cone avoidance result that every computable $2$-bounded coloring of $[\omega]^4$ admits a cone-avoiding infinite rainbow-like set.

If $k \leq n$, $X$ and $f: [\omega]^{n+1} \to \omega$ are such that $f(x_0,x_1,\ldots,x_n) \neq f(y_0,y_1,\ldots,y_n)$ for $(x_0,x_1,\ldots,x_n)$ and $(y_0,y_1,\ldots,y_n)$ in $[X]^{n+1}$ with distinct $(x_{n-k+1},\ldots,x_n)$ and $(y_{n-k+1},\ldots,y_n)$, then we say that $X$ is a \emph{$k$-tail $f$-rainbow}. A \emph{tail $f$-rainbow} is just a $k$-tail $f$-rainbow for $k = n$.

\begin{theorem}\label{thm:quadruples}
Suppose that $W \not\leq_T Z$ and $f: [\omega]^4 \to \omega$ is a $2$-bounded coloring computable in $Z$. Then there exists an infinite tail $f$-rainbow $X$ such that $W \not\leq_T X \oplus Z$.
\end{theorem}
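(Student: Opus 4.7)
The plan is to iterate the strategy of Lemma~\ref{lem:RRT3-cone-avoidance}, peeling off one coordinate at each of two stages. First I would pass from the quadruples coloring $f$ down to a $2$-bounded triples coloring $\hat{f}_1$ via cohesiveness on the last coordinate; then from $\hat{f}_1$ down to a $2$-bounded pairs coloring $\tilde{f}$ via a second cohesiveness step; then invoke the strong cone-avoidance Theorem~\ref{thm:rainbow-for-pairs} on $\tilde{f}$; and finally extract a tail $f$-rainbow $X$ from the resulting $\tilde{f}$-rainbow $H$ by an inductive, cohesiveness-driven thinning that parallels Claim~\ref{clm}.

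Concretely, using the relativization of \cite[Proposition 3.3]{Csima.Mileti:2009.rainbow} I first arrange that $\omega$ is a $1$-tail $f$-rainbow. Setting $\bar{f}(x_0,x_1,x_2,s) = \min\{\<u_0,u_1,u_2\> : f(u_0,u_1,u_2,s) = f(x_0,x_1,x_2,s)\}$, I form a uniformly $Z$-computable family $\vec{R}_1$ stabilizing $\bar{f}$ in its last argument, indexed by ordered pairs of triples, exactly as in Lemma~\ref{lem:RRT3-cone-avoidance}. Cone avoidance of $\COH$ (Theorem~\ref{thm:SS-CJS}(2)) then supplies a cohesive $C_1$ with $W \not\leq_T Z \oplus C_1$, from which I define the $2$-bounded $\hat{f}_1 : [\omega]^3 \to \omega$ as the cohesive limit of $\bar{f}(x_0,x_1,x_2,\cdot)$, with fallback $\<x_0,x_1,x_2\>$ whenever the limit is not realized by a triple in $[C_1]^3$.

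Repeating the pattern, a Csima--Mileti-type pigeonhole reduction is applied to $\hat{f}_1$ to arrange a $1$-tail structure, and a second stabilizing family $\vec{R}_2$ is formed from a canonical witness $\bar{\hat{f}_1}$. A strong cone-avoidance form of $\COH$ on $\vec{R}_2$ yields $C_2$ with $W \not\leq_T Z \oplus C_1 \oplus C_2$, giving a $2$-bounded $\tilde{f} : [\omega]^2 \to \omega$ as the cohesive limit. Theorem~\ref{thm:rainbow-for-pairs} then produces an infinite $\tilde{f}$-rainbow $H$ with $W \not\leq_T Z \oplus C_1 \oplus C_2 \oplus H$. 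Finally, an inductive thinning of $H$, at each stage choosing the least admissible next element, yields $X$: cohesiveness of $C_1$ and $C_2$ guarantees that past a finite bad set, any $f$-collision between two quadruples with distinct last three coordinates would, via the two stabilizations, force distinct pairs in $H$ to share a $\tilde{f}$-value, contradicting $H$ being a $\tilde{f}$-rainbow.

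The main obstacle is the second cohesiveness step. Since $\hat{f}_1$ is only $\Delta^{Z \oplus C_1}_2$-computable, both the Csima--Mileti reduction and the $\COH$ application must be performed at this higher level of complexity, i.e., for a family $\vec{R}_2$ and coloring $\hat{f}_1$ of arbitrary complexity over $Z \oplus C_1$. The required strong versions of these results are sketched by the authors in the remark following Corollary~\ref{cor:Seetapun} as following from Corollary~\ref{cor:Seetapun} itself via Friedberg-style jump inversion; carefully routing the cone-avoidance bookkeeping through this nested setting, and verifying that the tail $f$-rainbow property really does propagate back through both stabilizations, is where the proof does its real work.
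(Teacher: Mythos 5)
Your proposal takes a genuinely different route from the paper's, and it contains a gap that matters.

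The paper also begins with one $\COH$ pass and a Csima--Mileti relativization, reducing to a lemma about $2$-bounded colorings $\hat f : [\omega]^3 \to \omega$ of \emph{arbitrary} complexity. But at that point the paper does \emph{not} iterate $\COH$; it instead develops a direct Mathias-forcing argument (the class $\mathcal{B}$ of semi-normal colorings and the trees $\tilde T(\sigma,X,g)$), producing an infinite \emph{tail} $\hat f$-rainbow $G$, i.e.\ a set on which $\hat f$ itself is injective whenever the last two coordinates differ. The final thinning then works because $X_n \subseteq G$ inherits this pointwise property of $\hat f$ and can be checked against $f$.

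Your second $\COH$ pass gives something weaker. The set $H$ you obtain from Theorem~\ref{thm:rainbow-for-pairs} is a rainbow for the \emph{cohesive limit} $\tilde f(w,x) = \lim_{z\in C_2}\bar{\hat f_1}(w,x,z)$, not for $\hat f_1$ itself. Injectivity of $\tilde f$ on $[H]^2$ only tells you that $\hat f_1(w,x,z) \neq \hat f_1(w',x',z)$ for \emph{almost every} $z \in C_2$; it says nothing about a \emph{particular} $y \in H$ below the stabilization threshold $s_{w,x}$. It is entirely possible that $\hat f_1(w,x,y) = \hat f_1(w',x',y)$ for some $y \in X_n$ and distinct $(w,x),(w',x')\in[X_n]^2$ even though $\tilde f(w,x) \neq \tilde f(w',x')$. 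Such a $y$ is not flagged when it is added (the tail-$f$-rainbow check at that step only inspects quadruples whose last coordinate is $y$), but at a later step, once the first-pass cohesiveness has kicked in, one gets $f(w,x,y,z) = f(w',x',y,z)$ for almost every $z$, and the thinning has no admissible next element. You cannot avoid this by insisting at each step that the new element lie past the relevant $s_{w,x}$: those thresholds are encoded in the arbitrary-complexity data $\hat f_1$ and $C_2$, whereas the thinning must be computable from $Z \oplus H$ to keep the cone avoidance, which is exactly what breaks. (Note also that if this step \emph{did} go through, the same bookkeeping would in fact yield a full $\hat f_1$-rainbow, hence a full $f$-rainbow for quadruples, which would resolve the conjecture in \S\ref{sect:q}; so something must fail here.) This is precisely the obstruction the paper sidesteps by proving Lemma~\ref{lem:RRT4-t-rainbow-c-a} directly by forcing, so that the set produced is a tail rainbow for $\hat f$ itself and not merely for a cohesive approximation of it.
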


We follow the proof of Lemma \ref{lem:RRT3-cone-avoidance}. By \cite[Proposition 3.3]{Csima.Mileti:2009.rainbow} and the cone avoidance of $\COH$, it suffices to prove the following cone avoidance result, which is an analogous of Theorem \ref{thm:rainbow-for-pairs}.

\begin{lemma}\label{lem:RRT4-t-rainbow-c-a}
Suppose that $W \not\leq_T Z$ and $\hat{f}: [\omega]^3 \to \omega$ is $2$-bounded. Then there exists an infinite tail $\hat{f}$-rainbow $X$ such that $W \not\leq_T X \oplus Z$.
\end{lemma}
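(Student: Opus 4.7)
The plan is to mirror the two-step proof of Theorem \ref{thm:rainbow-for-pairs}: first, use a measure-theoretic argument to pass to a cone-avoiding infinite subset on which $\hat f$ becomes a \emph{1-tail rainbow} (so that triples with distinct maxima always receive distinct colors); then, working under the assumption that $\omega$ is a 1-tail $\hat f$-rainbow, adapt the Seetapun-style Mathias forcing of Lemma \ref{lem:rainbow-for-pairs} over tail $\hat f$-rainbows to produce the desired tail rainbow $X$. Since the coloring $\hat f$ has unrestricted complexity, both steps must avoid consulting $\hat f$ directly and instead proceed through measure and $\Pi^0_1$-class arguments as in Section \ref{sect:pairs}.

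For the preprocessing step I would imitate Lemma \ref{lem:tail-rainbow-random}. If $\sigma$ is a 1-tail rainbow, the $c>\max\sigma$ that destroy this property are bounded in number by $\sum_{c_i\in\sigma}\binom{c_i}{2}$, since by 2-boundedness each color used among the triples with maximum in $\sigma$ forces at most one bad $c$. Choosing a fast-growing height function $h$ so that $h(l+1)-h(l)$ vastly exceeds $l\binom{h(l)}{2}$, and restricting to sequences whose $l$-th element lies in $[h(l),h(l+1))$, I obtain a tree of 1-tail rainbows whose image under the standard computable bijection is a $\Pi^{\hat f}_1$ subclass of $2^\omega$ of positive measure. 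By Ku\v{c}era's lemma any Martin-L\"{o}f random in $\hat f$ then computes an infinite 1-tail rainbow, and since $\{Y:W\leq_T Y\oplus Z\}$ is null I may pick such a random $R$ with $W\not\leq_T R\oplus Z$, yielding a cone-avoiding infinite 1-tail $\hat f$-rainbow on which the main step proceeds.

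For the main step I reproduce the framework of Section \ref{sect:pairs}: a $\Pi^0_1$ class $\mathcal{A}$ of normal 2-bounded triples-colorings for which $\omega$ is a 1-tail rainbow, admissible Mathias conditions defined via viable numbers for tail $g$-rainbow extensions, and trees $T(\sigma,X,g)$ whose branching bound is now $b_l\approx 2^{l+3}(|\sigma|+l)^2$ rather than $2^{l+3}(|\sigma|+l)$. The key step, and the main obstacle, is the bushiness lemma analogous to Lemma \ref{lem:rb-cntng}. Given a good $\tau^-$ and a fixed $y$ with $\sigma\tau^-\{y\}$ a tail $g$-rainbow, I would bound the number of $x<y$ in $V(\sigma\tau^-,g)$ such that $\sigma\tau^-\{x,y\}$ fails the tail rainbow property. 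The 1-tail assumption on $g$ forces any new collision to occur between two triples with a common maximum, and a short case analysis, using that $\sigma\tau^-\{x\}$ and $\sigma\tau^-\{y\}$ are themselves tail $g$-rainbows together with $x>\max\sigma\tau^-$, shows this maximum must be $y$ and the violation has the form $g(u,x,y)=g(a,b,y)$ for some $u<x$ in $\sigma\tau^-$ and some $(a,b)\in[\sigma\tau^-]^2$ with $b\neq x$; by 2-boundedness each such $(a,b)$ pins down at most one $x$, giving $O((|\sigma|+l)^2)$ bad $x$ per $y$. With the adjusted $b_l$, the induction of Lemma \ref{lem:rb-cntng} carries over, and the splitting-versus-non-splitting dichotomy of Lemma \ref{lem:cone-avoidance} (Theorem \ref{thm:Jockusch-Soare} on the $\Pi^0_1$ side, Corollary \ref{cor:Seetapun} on the $\Sigma^0_1$ side) together with the finite extension Lemma \ref{lem:finite-ext} build a descending chain of admissible conditions whose union is an infinite tail $\hat f$-rainbow $X$ with $W\not\leq_T X\oplus Z$.
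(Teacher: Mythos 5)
Your proposal is correct and follows essentially the same two-step plan as the paper's own sketch: a measure-theoretic preprocessing (the analogue of Lemma \ref{lem:tail-rainbow-random}) yielding a cone-avoiding set on which $\hat{f}$ becomes a $1$-tail rainbow, followed by Seetapun-style Mathias forcing over the $\Pi^0_1$ class of semi-normal $2$-bounded colorings of $[\omega]^3$ with a tree whose branching bound is on the order of $2^{k+3}\binom{k}{2}$. In particular your central counting — that for fixed $\tau$ and $y$ every collision destroying the tail-rainbow property must have the form $g(u,x,y)=g(a,b,y)$ with $(a,b)\in[\sigma\tau]^2$, so that $2$-boundedness pins down at most $\binom{|\sigma\tau|}{2}$ bad $x$'s — is exactly the estimate the paper invokes to carry over Lemmata \ref{lem:rb-cntng} and \ref{lem:bushy}.
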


Fix $W, Z$ and $\hat{f}$ as in the above lemma. By an argument similar to that of Lemma \ref{lem:tail-rainbow-random}, we can assume that $\omega$ is a $1$-tail rainbow for $\hat{f}$.

To apply Mathias forcing as in \S \ref{sect:pairs}, we define a class of colorings. For a $2$-bounded $g: [\omega]^{3} \to \omega$, if
$$
    \forall (x_0,x_1,x_{2}) \in [\omega]^{3}(g(x_0,x_1,x_{2}) = \<\tilde{g}(x_0,x_1,x_2),x_2\>)
$$
where $\tilde{g}(x_0,x_1,x_2) = \min\{\<y_0,y_1\>: g(y_0,y_1,x_2) = g(x_0,x_1,x_2)\}$, then $g$ is \emph{semi-normal}. Let $\mathcal{B}$ be the set of all semi-normal $2$-bounded coloring of $[\omega]^{3}$. Clearly, $\omega$ is a $1$-tail rainbow for every $g \in \mathcal{B}$ and $\mathcal{B}$ can be identified as a $\Pi^0_1$ subset of Cantor space under some effective coding. Moreover, every $2$-bounded coloring of triples with $\omega$ being a $1$-tail rainbow is equivalent to some element of $\mathcal{B}$, in the sense of having same tail rainbows. So, we may assume that $\hat{f} \in \mathcal{B}$.

Fix $\sigma \in [\omega]^{< \omega}$ and $g \in \mathcal{B}$. Let $tV(\sigma, g)$ be the set of \emph{tail viable} numbers, i.e., $x \in tV(\sigma,g)$ if and only if $\sigma \{x\}$ is a tail $g$-rainbow. For a Mathias condition $(\sigma, X)$, let
$$
    \mathcal{B}_{\sigma, X} = \{g \in \mathcal{B}: X \subseteq tV(\sigma, g)\} \in \Pi^X_1.
$$

If $g \in \mathcal{B}_{\sigma, X}$ then we associate a tree $\tilde{T}(\sigma, X, g) \subseteq [X]^{<\omega}$ to $(\sigma, X, g)$. $\tilde{T}(\sigma, X, g)$ is defined by induction as following.
\begin{enumerate}
    \item $\emptyset \in \tilde{T}(\sigma, X, g)$,
    \item If $\tau \in \tilde{T}(\sigma, X, g)$ and $x$ is among the first $c_{|\sigma \tau|}$ many elements in $tV(\sigma \tau, g) \cap X \cap (\max \tau,\infty)$ where $c_k = \min\{2^c: 2^c \geq 2^{k+3} \binom{k}{2}$, then $\tau \{x\} \in \tilde{T}(\sigma, X, g)$.
\end{enumerate}
$\tilde{T}(\sigma, X, g)$ is similar to the trees in \S \ref{sect:pairs}. If $\tau \in \tilde{T}(\sigma,X,g)$ then $\sigma\tau$ is a tail $\hat{f}$-rainbow.

Fix $\tau \in \tilde{T}(\sigma,X,g)$ and $y \in X \cap tV(\sigma\tau,g)$. If $\tau\{x\} \in \tilde{T}(\sigma,X,g)$ and $y \in tV(\sigma\tau,g) - tV(\sigma\tau\{x\},g)$, then there are $w,u,v \in \sigma\tau$ such that $g(w,x,y) = g(u,v,y)$. By $2$-boundedness of $g$, for our fixed $y$ and $\tau$, there could be at most $\binom{|\sigma\tau|}{2}$ many $x$'s as above. This simple calculation allows us to establish something like Lemmata \ref{lem:rb-cntng} and \ref{lem:bushy} for $\tilde{T}(\sigma,X,g)$. So, with this new family of trees, we can construct an appropriate Mathias generic as in \S \ref{sect:pairs} and thus obtain a tail $\hat{f}$-rainbow $G$ such that $W \not\leq_T Z \oplus G$. The details are left to the reader.

Actually, we can slightly generalize Theorem \ref{thm:quadruples} to the following corollary.

\begin{corollary}
Suppose that $W \not\leq_T Z$ and $f: [\omega]^{n+4} \to \omega$ is $2$-bounded and $Z$-computable. Then there exists an infinite $3$-tail $f$-rainbow $X$ such that $W \not\leq_T X \oplus Z$.
\end{corollary}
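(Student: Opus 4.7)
The plan is to generalize the combination of $\COH$-reduction and Mathias forcing used in Theorem \ref{thm:quadruples} and Lemma \ref{lem:RRT4-t-rainbow-c-a} to arity $n+4$. Given $f: [\omega]^{n+4} \to \omega$ that is $2$-bounded and $Z$-computable with $W \not\leq_T Z$, I would first apply \cite[Proposition 3.3]{Csima.Mileti:2009.rainbow} to pass to an infinite $Z$-computable subset on which $\omega$ becomes a $1$-tail $f$-rainbow. Then define
\[
\bar{f}(x_0,\ldots,x_{n+3}) = \min\{\<u_0,\ldots,u_{n+2}\>: f(u_0,\ldots,u_{n+2},x_{n+3}) = f(x_0,\ldots,x_{n+3})\}
\]
and the $Z$-computable family $\vec{R}$ indexed as in Lemma \ref{lem:RRT3-cone-avoidance}; apply the cone avoidance of $\COH$ (Theorem \ref{thm:SS-CJS}(2)) to obtain a cohesive $C$ with $W \not\leq_T Z \oplus C$. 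Setting $\hat{f}(\vec x) = \lim_{s \in C} \bar{f}(\vec x, s)$ (with an arbitrary default if the limit escapes) produces a $2$-bounded $(n+3)$-ary coloring of arbitrary complexity.

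The heart of the proof is now an arity-$(n+3)$, tail-$2$ analog of Lemma \ref{lem:RRT4-t-rainbow-c-a}: for any $2$-bounded $g: [\omega]^{n+3} \to \omega$ of arbitrary complexity and any $Z'$ with $W \not\leq_T Z'$, there is an infinite $2$-tail $g$-rainbow $G$ with $W \not\leq_T G \oplus Z'$. I would establish this by a Mathias-forcing construction paralleling Section \ref{sect:quadruples}: define the analog of the class $\mathcal{B}$ consisting of semi-normal $2$-bounded $(n+3)$-ary colorings on which $\omega$ is a $1$-tail rainbow, so that WLOG $g$ lies in this class; then introduce trees $\tilde T(\sigma, X, g) \subseteq [X]^{<\omega}$ of $2$-tail $g$-rainbow extensions of $\sigma$ truncated at $c_{|\sigma\tau|}$ candidates per level, with $c_l$ suitably chosen powers of two. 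The key counting bound, that by $2$-boundedness of $g$ only polynomially many $x$'s (in $|\sigma\tau|$) can ruin $2$-tail viability when extending $\sigma\tau$ by $y$, is the natural analog of the $\binom{|\sigma\tau|}{2}$-bound in Section \ref{sect:quadruples}; the remainder of the Mathias machinery (admissible conditions, forcing incomputability via splitting computations as in Lemma \ref{lem:cone-avoidance}, and finite extensions as in Lemma \ref{lem:finite-ext}) transfers with only bookkeeping changes.

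Once such a $G \subseteq C$ is produced with $Z' = Z \oplus C$, greedily extract $X \subseteq G$ as a $3$-tail $f$-rainbow in the style of Lemma \ref{lem:RRT3-cone-avoidance}: at each stage pick the least $a \in G \cap (\max X_s, \infty)$ for which $X_s \cup \{a\}$ remains a $3$-tail $f$-rainbow. That such $a$ exists for sufficiently large $a \in G$ follows from a three-case analysis on pairs of $(n+4)$-tuples of $X_s \cup \{a\}$ with distinct last $3$ coordinates: (i) two tuples both ending in $a$, which reduces to two $(n+3)$-tuples in $X_s \subseteq G$ with distinct last $2$ coordinates, where $G$ being a $2$-tail $\hat{f}$-rainbow together with cohesiveness of $C$ forces distinct $\hat{f}$-values and hence distinct $f$-values once $a$ is far enough in $C$; (ii) two tuples with distinct last coordinates, handled by $\omega$ being a $1$-tail $f$-rainbow; (iii) two tuples both in $X_s$, handled by the previous greedy stage. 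Since $f \leq_T Z$, we have $X \leq_T G \oplus Z$, preserving $W \not\leq_T X \oplus Z$. The main obstacle is the Mathias-forcing step at arity $n+3$, tail $2$; the combinatorial verification of bushiness is more involved than in Section \ref{sect:quadruples} but is purely a matter of tracking binomial coefficients in $n+2$ variables, with no new ideas required beyond what is already present in Sections \ref{sect:pairs} and \ref{sect:quadruples}.
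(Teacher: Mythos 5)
Your argument is correct and is exactly the proof the paper intends (the corollary is stated without proof as a ``slight generalization'' of Theorem \ref{thm:quadruples}): a single $\COH$ reduction from arity $n+4$ to $n+3$, an arity-$(n+3)$, tail-$2$ analog of Lemma \ref{lem:RRT4-t-rainbow-c-a} where the counting bound $\binom{|\sigma\tau|}{2}$ becomes $\binom{|\sigma\tau|}{n+2}$, and the greedy extraction as in Lemma \ref{lem:RRT3-cone-avoidance}. The crucial observation---that the tail depth in the Mathias stage stays fixed at $2$ while only the arity grows, so the bad-$x$ count remains polynomial in $|\sigma\tau|$ and the bushiness argument survives---is correctly identified, and the rest is bookkeeping as you say.
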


\section{A Conjecture}\label{sect:q}

Inspired by \cite[Theorem 3.1]{Cholak.Jockusch.ea:2001.Ramsey}, we may wonder whether we can control triple jumps of rainbows for colorings of triples. This is recently confirmed by the author in an upcoming work: every computable $2$-bounded coloring of triples admits a $\low_3$ infinite rainbow $X$ (i.e., $X''' \equiv_T \emptyset'''$). As a consequence of this answer and the lower complexity bound in \cite[Theorem 2.5]{Csima.Mileti:2009.rainbow}, $\RRT^3_2 \not\vdash \RRT^4_2$. However, the proof for controlling triple jumps is complicated, and apparently hard to be adapted to override cone avoidance results here.

Finally I boldly conjecture the following.

\begin{conjecture}
For all $n$, $\RCA + \RRT^n_2 \not\vdash \ACA$.
\end{conjecture}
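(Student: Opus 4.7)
The plan is to strengthen the reverse-mathematical pattern of Theorem \ref{thm:RRT3-ACA} to arbitrary arity by proving, via induction on $n$, a uniform strong cone-avoidance principle: for every $n \geq 2$, every $W \not\leq_T Z$, and every $2$-bounded $f : [\omega]^n \to \omega$ \emph{of arbitrary complexity}, there is an infinite $f$-rainbow $X$ with $W \not\leq_T X \oplus Z$. Once this is in hand, the $\omega$-model construction in the proof of Theorem \ref{thm:RRT3-ACA} generalises verbatim: iterating the principle through all $2$-bounded colorings of $n$-tuples coded in each successive model yields $\mathcal{M} \models \RCA + \RRT^n_2$ omitting any fixed non-computable arithmetic $W$, so $\mathcal{M} \not\models \ACA$. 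The base case $n = 2$ is exactly Theorem \ref{thm:rainbow-for-pairs}.

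For the induction step from $n-1$ to $n$, I follow the four-move architecture of Lemma \ref{lem:RRT3-cone-avoidance}, arranged so that each move tolerates $f$ of arbitrary complexity. First, I pass to an infinite $1$-tail $f$-rainbow $X_0$ with $W \not\leq_T X_0 \oplus Z$; this is the strong analogue of \cite[Proposition 3.3]{Csima.Mileti:2009.rainbow} and should follow from a Ku\v{c}era-style measure-theoretic argument patterned on Lemma \ref{lem:tail-rainbow-random}, with the computable interval sequence chosen to grow fast enough (an exponential in the level times a polynomial of degree $n$) to dominate the at-most-$O(k^n)$ count of $1$-tail-violating extensions at stage $k$. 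Second, I define $\bar f$ and its associated sequence $\vec R$ as in Lemma \ref{lem:RRT3-cone-avoidance} and invoke the strengthened $\COH$ cone avoidance for arbitrary $\vec R$ flagged as an exercise after Corollary \ref{cor:Seetapun} (obtained by a Friedberg jump-inversion that pushes $\vec R$ into the jump of a cone-avoiding base, then applying Theorem \ref{thm:SS-CJS}(2)) to produce a cohesive $C \subseteq X_0$ with $W \not\leq_T C \oplus Z$. Third, I set $\hat f : [C]^{n-1} \to \omega$ to be the stable value of $\bar f$, which remains $2$-bounded by the pigeonhole argument of Lemma \ref{lem:RRT3-cone-avoidance}, and invoke the induction hypothesis on $\hat f$ (identifying $C$ with $\omega$) to obtain an infinite $\hat f$-rainbow $G \subseteq C$ with $W \not\leq_T G \oplus Z$. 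Fourth, I extract a full $f$-rainbow $X \subseteq G$ by the finite-extension procedure of Claim \ref{clm}: when extending a current $f$-rainbow $X_k$ by some $a \in G$ large enough that $\bar f(\vec x, s) = \hat f(\vec x)$ for every $(n-1)$-tuple $\vec x$ from $X_k$ and every $s \in G$ beyond $a$, any would-be $f$-collision inside $X_k \cup \{a\}$ must involve two $n$-tuples both maximised at $a$, forcing an $\hat f$-collision on distinct $(n-1)$-tuples in $X_k$ and contradicting the $\hat f$-rainbowness of $G$; all other potential collisions are ruled out by the $1$-tail property of $f$ and the fact that $X_k$ is already an $f$-rainbow.

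The main obstacle is structural rather than combinatorial: the induced coloring $\hat f$ has arbitrary complexity even when $f$ is computable, so the induction hypothesis at every level must be the \emph{strong} cone-avoidance statement, and Theorem \ref{thm:rainbow-for-pairs} remains the only oracle-free engine available, propagated upward through $n - 2$ applications of the $\COH$-reduction. The two auxiliary ingredients (strong $1$-tail-rainbow pass and strong $\COH$ cone avoidance) are plausibly routine at higher arity, but the bushiness count for the first has not been explicitly verified in the literature for $n > 2$, and checking that the measure-theoretic interval bound $h$ can be calibrated uniformly in $n$ is the cleanest technical point. An alternative route would be a direct Mathias-forcing construction at each arity in the spirit of Section \ref{sect:pairs}, with trees of full $g$-rainbows for semi-normal $g$: the non-viable extensions per stage should be bounded by $O(|\sigma\tau|^{n-1})$, which is absorbable into an exponentially branching bound, but the analogues of Lemmas \ref{lem:rb-cntng}, \ref{lem:bushy}, and \ref{lem:cone-avoidance} become progressively more delicate as $n$ grows, so the $\COH$-reduction route looks preferable.
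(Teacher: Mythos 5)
The architecture you describe is essentially the one the paper uses for $n=3$ (Lemma \ref{lem:RRT3-cone-avoidance}) and for tail rainbows at $n=4$ (Theorem \ref{thm:quadruples}), but your inductive claim --- the \emph{strong} cone-avoidance at every arity, with $f$ of arbitrary complexity --- breaks at exactly the point the paper is careful to avoid, and this is why the statement is left as a conjecture.

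The problem is your fourth move. Suppose we try to push from strong cone avoidance at arity $n-1$ to strong cone avoidance at arity $n$. The induction hypothesis, applied to $\hat f$ relative to $Z \oplus C$, hands you an $\hat f$-rainbow $G$ with $W \not\leq_T G \oplus Z$; so far so good. You then extract a full $f$-rainbow $X \subseteq G$ by repeated finite extension, choosing each new point $a_k \in G$ larger than a threshold $s_0(X_k)$ at which $\bar f(\vec x, \cdot)$ has already reached its limit $\hat f(\vec x)$ on $G$ for every $(n-1)$-tuple $\vec x$ from $X_k$. That threshold is determined by $f$, not by $G$ or $Z$. In the paper's Lemma \ref{lem:RRT3-cone-avoidance} this is harmless because $f \leq_T Z$ is part of the hypothesis, so $X \leq_T Z \oplus G$ and cone avoidance carries over; the proof there explicitly closes with ``As $X \leq_T Z \oplus G$, $W \not\leq_T Z \oplus X$.'' But in the \emph{strong} version $f$ has arbitrary Turing complexity, the sequence $(a_k)$ is computed using $f$ as an oracle, and there is nothing to prevent $X$ from coding $W$. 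The set $X$ you build need not satisfy $W \not\leq_T X \oplus Z$, so the strong cone-avoidance statement is not proved at arity $n$. Since weak cone avoidance at arity $n$ (what the $\omega$-model construction actually needs) is derived from \emph{strong} cone avoidance at arity $n-1$, the failure to establish strong cone avoidance at arity $3$ already blocks the conjecture at $n=4$. This is precisely the point the paper flags: Theorem \ref{thm:quadruples} and Lemma \ref{lem:RRT4-t-rainbow-c-a} only yield \emph{tail} rainbows for quadruples, and the concluding remarks say explicitly that the jump-control methods are ``apparently hard to be adapted to override cone avoidance results here.''

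Your alternative route has a parallel defect that is worth naming because it shows the obstruction is not merely bookkeeping. You claim that for a direct Mathias-forcing construction with trees of full $g$-rainbows at arity $n$, the number of non-viable extensions at a node $\sigma\tau$ is $O(|\sigma\tau|^{n-1})$. This is false already at $n=3$ for arbitrary semi-normal $g$: a stably colliding pair of $(n-1)$-tuples $\vec u, \vec v \subset \sigma\tau$ with $g(\vec u, x) = g(\vec v, x)$ for \emph{every} $x$ makes the set of viable extensions empty. That is exactly the phenomenon that $\hat f$ and the $\COH$ reduction are designed to kill, so you cannot dispense with them. But after the $\COH$ reduction, the collisions that survive are finite for each pair of $(n-1)$-tuples inside $G$, yet with no $f$-independent bound on their number; so the tree of full $g$-rainbows on $G$ is not $(g \oplus G)$-computably bounded in the uniform, level-by-level way that Lemma \ref{lem:bushy} and the Ku\v{c}era argument require. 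Thus neither route produces strong cone avoidance above arity $2$, and the two supporting ingredients you correctly identify as routine --- the measure-theoretic passage to a $1$-tail rainbow (a calibration of $h$ against $\binom{k}{n}$ rather than $\binom{k}{2}$) and the strong $\COH$ cone avoidance --- do not help with this gap. Until one finds a method of extracting a full $f$-rainbow from an $\hat f$-rainbow that does not consult $f$ (or sidesteps the need for strong cone avoidance at intermediate arities altogether), the induction does not close.
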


\bibliographystyle{plain}

\end{document}